\newtheorem{theorem}{Theorem}[section]
\newtheorem{definition}{Definition}[section]
\newtheorem{lemma}{Lemma}[section]
\newtheorem{example}{Example}[section]
\newtheorem{remark}{Remark}[section] 
\newtheorem{proposition}{Proposition}[section]
\def\dref#1{(\ref{#1})}
 \def\sin{\mbox{sin}\,}
 \def\dfrac{\displaystyle\frac}
\def\be{\begin{equation}}
\def\bel{\begin{equation}\label}
\def\ee{\end{equation}}
\def\ba{\begin{array}}
\def\ea{\end{array}}
\def\banl{\begin{eqnarray}\label}
\def\ean{\end{eqnarray}}
 \def\bna{\begin{eqnarray}}
\def\ena{\end{eqnarray}} \def\dref#1{(\ref{#1})}
\begin{document}

\title{  Stabilizability Theorems  on Discrete-time Nonlinear  Uncertain Systems  \footnotemark[1]
\author{Zhaobo Liu\footnotemark[2]
, \,Chanying Li \footnotemark[2]
          }
}

\footnotetext[1]{This work was supported in part by the National Natural Science Foundation of China under Grants  11925109  and 11688101.  }

\footnotetext[2]{%
Z.~Liu and C.~Li are with the Key Laboratory of Systems and Control, Academy of Mathematics and Systems Science, Chinese Academy of Sciences, Beijing 100190, P.~R.~China.   They are also with the School of Mathematical Sciences, University of Chinese Academy of Sciences, Beijing 100049, P. R. China.  Corresponding author: Chanying~Li (Email: \texttt{cyli@amss.ac.cn}).}

 \maketitle

\begin{abstract}
This paper derives two  stabilizability theorems  for a basic class  of discrete-time nonlinear  systems with multiple unknown parameters.
First, we claim that a discrete-time  multi-parameter  system is stabilizable if
its nonlinear growth rate is dominated by a polynomial rule. Later,  we  find that a   stabilizable multi-parameter system
 in discrete time is possible to grow exponentially fast. Meanwhile,
optimality and closed-loop identification are also  discussed in this paper.

\end{abstract}

\section{Introduction.}
Adaptive control of
 linear systems (\cite{Astr95}, \cite{cg91}, \cite{goodwin},
\cite{IS96}) and nonlinear systems growing linearly (\cite{TK}, \cite{xieguo00a}) has been a mature topic for  decades, both in continuous time
 and discrete time.
But when it comes to
 systems whose output nonlinearities are faster than linearities, the similarities of adaptive control between continuous- and discrete-time systems  disappear.
 Most  continuous-time nonlinear systems can be globally stabilized by employing
nonlinear damping or back-stepping techniques (\cite{ka} and  \cite{KKK95}), however,
its discrete-time counterpart is not that favored by fortune.
 It was found   in  \cite{guo97} that even for the following basic  discrete-time stochastic system
\begin{equation}\label{sys1}
y_{t+1}=\theta y^b_t+u_t+w_{t+1},\quad \theta \mbox{ is an unknown scalar},
 \end{equation}
 the stabilizability is still possible to be failure.  It showed that the system is stabilizable if and only if $b<4$.
 This fundamental difficulty in discrete-time control was further confirmed by \cite{xieguo1999}, where  system \dref{sys1} is extended to
 the multi-parameter  case:
  \begin{equation}\label{system}
y_{t+1}=\theta_1 y_t^{b_1}+\theta_2 y_t^{b_2}+\cdots+\theta_n
y_t^{b_n}
 +u_t+w_{t+1}.
\end{equation}
Work \cite{xieguo1999}  established an ``impossibility theorem'' by providing a polynomial rule, which  was  proved, a decade later by \cite{lilam13},
  to be a necessary and sufficient condition of the
stabilizability of system \dref{system}. This  polynomial rule also serves as a critical
stabilizability criterion for system  \dref{system}  in the deterministic framework (see \cite{lixieguo06}).
Analogous phenomena arise in the adaptive control of
discrete-time nonparametric nonlinear systems (\cite{Ma08n}, \cite{xieguo00b},
\cite{zhangguo}), semiparametric uncertain systems (\cite{guo2012}, \cite{Sokolov}), linear stochastic systems with
unknown time-varying parameter processes (\cite{xueguohuang}), and continuous-time nonlinear systems with sampled observations (\cite{xueguo}). We refer the readers to \cite{guo1996ls}, 
\cite{Zhao2019Stabilizability},\cite{xu19} for other related works


This paper is intended to extend the results of \cite{lilam13} and \cite{xieguo1999} to the following class of systems:
\begin{equation}\label{system1}
y_{t+1}=\theta_1f_1(y_t)+\theta_2 f_2(y_t)+\cdots+\theta_n
f_n(y_t)
 +u_t+w_{t+1}.
\end{equation}
We conjecture that system \dref{system1} is stabilizable if the nonlinear growths of $f_1,\ldots,f_n$ are dominated by
 some power functions $x^b_1,\ldots,x^{b_n}$ respectively, where $b_1,\ldots,b_n$ satisfy the polynomial rule referred.
  Comparing system \dref{system} and  system \dref{system1}, a significant difference is that $f_1,\ldots,f_n$  in \dref{system1}
  may be very close or intersect infinitely often, while functions $x^{b_1},\ldots,x^{b_n}$ with $b_1>\cdots>b_n>0$ in system \dref{system} are away from each other
   when $x$ is large. Intuitively,
 this will cause some obstacles in  the closed-loop identification for  system \dref{system1}.  And then, the stabilizability might be affected.
 By establishing an inequality on the minimal eigenvalue of the inverse information  matrix in Proposition \ref{tzz1},
we prove our conjecture in Theorem \ref{1sta}.

 Theorem \ref{1sta} requires that  system \dref{system1}  grows no faster than some power function.
But this is not the growth rate limit for the stabilizability of system \dref{system1}. For the scalar case ($n=1$),
recall that
\cite{liu2018} asserts $f_1(x) = O(|x|^{b_1})$
with $b_1<4$ is only required for a very tiny fraction of $x$ in $\mathbb{R}$, even if it grows exponentially
fast for the other $x$. Is it  true for the multi-parameter  case?
We prove in this paper that a multi-parameter  stabilizable system still has a chance to grow exponentially. With the help of
 the proposed inequality in Proposition \ref{tzz1}, we again find that the stabilizability of system \dref{system1} can be achieved if $f_1,\ldots,f_n$
 are bounded on a  tiny fraction of $x$ in $\mathbb{R}$,  while these functions may grow exponentially
fast for the other $x$.

The  paper is built up as follows.  Section \ref{MR} presents two stabilizability theorems and    Section \ref{CLI} discusses the
corresponding  closed-loop identification. The proofs of the main results are included in Sections \ref{soc}--\ref{AppA1}.
\section{Global Stabilizability}\label{MR}
We study the following discrete-time nonlinear  system with multiple unknown  parameters:
\begin{eqnarray}\label{sys21}
 y_{t+1}=\theta ^{\tau}\phi(y_t)+u_t+w_{t+1},~~~~~t\geq0,
\end{eqnarray}
where $\theta=(\theta_1,\ldots,\theta_n)^{\tau}\in \mathbb{R}^{n},  n\geq 2$ are unknown parameters, $y_t, u_t, w_t$ are the  output, input and noise signals, respectively. Assume
that $\phi=(f_{1},\ldots,f_{n})^\tau : \mathbb{R}\to \mathbb{R}^{n}$ is a known measurable vector function, where $f_{j} \in C^{n}(E)$,   $1\leq j\leq n$,  and  $E$ is an open set in $\mathbb{R}$. We rewrite \dref{sys21} as
\begin{eqnarray}\label{sys2}
 y_{t+1}=\sum_{j=1}^{n}\theta_jf_{j}(y_t)+u_t+w_{t+1},~~~~~t\geq0,
\end{eqnarray}
and present the definition of stabilizability in the following sense.
\begin{definition}
System \dref{sys2} is said to be almost surely globally stabilizable,  if there exits a feedback control law
\begin{equation}\label{utlaw}
u_t\in \mathcal{F}_{t}^y\triangleq\sigma\lbrace y_i,0\leqslant i\leqslant t\rbrace,~t=0,1,\ldots
 \end{equation}such that for any initial conditions $y_0\in \mathbb{R},$
 \begin{equation}\label{wd}
 \sup_{t\geq 1}\frac{1}{t}\sum_{i=1}^{t}y_i^2<+\infty,\quad \mbox{a.s.}.
 \end{equation}
\end{definition}
 We analyze our problem  
 in some standard assumptions below.

\begin{description}

\item[A1]
The noise $\{w_t\} $ is an  i.i.d random sequence with  $w_1 \sim N(0,\sigma^2)$.

\item[A2] Parameter $ \theta\sim N(\theta_0,I_{n})$ is independent of $\{w_t\}$.

\item[A3]  $f_{1},\ldots,f_{n}$ are linearly independent on $E$.
\end{description}
\begin{remark}
We consider a typical case where $E=\mathbb{R}$. If $f_{j}\equiv 0$ for all $j\in [1,n]$, system \dref{sys2} degenerates to $y_{t+1}=w_{t+1}$. Otherwise,
with no loss of generality, let $f_{1},\ldots,f_{k}$ be linearly independent on $\mathbb{R}$, $1\leq k\leq n$, such that every  $f_l, l\in [k+1,n]$ is a linear combination of  $f_{1},\ldots,f_{k}$. Consequently, there are $n-k$ unit vectors $(x_{1,l},\ldots,x_{k,l})^{\tau}$ satisfying $f_{l}(y)=\sum_{j=1}^{k}x_{j,l}f_{j}(y)$, $l\in[k+1,n]$. Therefore, by letting
\begin{equation*}
\theta_j'\triangleq{\theta_j+\sum_{l=k}^{n}x_{j,l}\theta_l},\quad 1\leq j\leq k,
\end{equation*}
system \dref{sys2} becomes
\begin{equation}\label{jj1}
 y_{t+1}=\sum_{j=1}^{k}\theta_j'f_{j}(y_t)+u_t+w_{t+1},~~~~~t\geq0.
 \end{equation}
This means it suffices to discuss system \dref{jj1}.  So, Assumption A3 is a natural condition.
\end{remark}
Our first theorem below  extends the result of \cite{lilam13} to a more general situation. 
\begin{theorem}\label{1sta}
Under Assumptions A1--A3,  system \dref{sys2} is globally stabilizable if
\begin{equation*}
f_{j}(x)=O(|x|^{b_j})+O(1),\quad  1\leq j\leq n,
\end{equation*}
where $b_1>b_2>\cdots>b_{n}>0$ are $n$ numbers satisfying $b_1>1$ and
\begin{equation}\label{px}
P(x)=x^{n+1}-b_1x^n+(b_1-b_2)x^{n-1}+\cdots+b_{n}>0,\quad x\in(1,b_1).
\end{equation}
\end{theorem}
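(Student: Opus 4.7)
The plan is to design a certainty-equivalence adaptive controller based on the conditional mean of $\theta$ and then reduce stabilizability to a sharp eigenvalue estimate on the inverse information matrix. Under Assumptions A1--A2, the Gaussian--Gaussian conjugacy makes the posterior of $\theta$ given $\mathcal{F}_t^y$ Gaussian with mean $\hat\theta_t$ and covariance $P_t$ satisfying the Riccati recursion
\[
P_{t+1}^{-1} = P_t^{-1} + \sigma^{-2}\phi(y_t)\phi(y_t)^\tau,\qquad P_0 = I_n.
\]
Choosing the feedback $u_t = -\hat\theta_t^{\tau}\phi(y_t) \in \mathcal{F}_t^y$ gives the closed-loop prediction error
\[
y_{t+1} = \tilde\theta_t^{\tau}\phi(y_t) + w_{t+1},\qquad \tilde\theta_t \triangleq \theta-\hat\theta_t,
\]
so stabilizability boils down to bounding the quadratic form $(\tilde\theta_t^\tau\phi(y_t))^2$ along trajectories.

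First I would show that the Lyapunov-type quantity $V_t \triangleq \tilde\theta_t^{\tau}P_t^{-1}\tilde\theta_t$ is almost surely finite, using the Gaussianity of the posterior together with a standard nonnegative-supermartingale argument built from the Riccati update and the independence of $\{w_t\}$ from $\mathcal{F}_t^y$. A Cauchy--Schwarz inequality in the $P_t^{-1}$-metric then yields
\[
(\tilde\theta_t^{\tau}\phi(y_t))^2 \leq V_t\cdot \phi(y_t)^\tau P_t\phi(y_t) \leq \frac{V_t\,\|\phi(y_t)\|^2}{\lambda_{\min}(P_t^{-1})}.
\]
Combining this with $\|\phi(y_t)\|^2 = O(|y_t|^{2b_1})+O(1)$, which follows from $f_j(x)=O(|x|^{b_j})+O(1)$ and $b_1>b_j$ for $j>1$, turns the closed-loop dynamics into the recursion
\[
y_{t+1}^2 \leq C\,\frac{|y_t|^{2b_1}}{\lambda_{\min}(P_t^{-1})} + 2 w_{t+1}^2 + O(1),\qquad\mbox{a.s.}
\]

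The crux is then to lower-bound $\lambda_{\min}(P_t^{-1})$ sharply enough to close an induction on the Cesaro sums $\frac{1}{t}\sum_{i\leq t}y_i^2$. For this I would invoke Proposition \ref{tzz1}, which supplies precisely such a growth estimate on the minimum eigenvalue of the inverse information matrix for the present nonlinear regressor $\phi$. With that estimate in hand, a scaling argument takes over: assuming for contradiction that $|y_t|$ grows like $t^{\gamma}$ along a subsequence for some putative exponent $\gamma>0$, the preceding recursion, the eigenvalue bound from Proposition \ref{tzz1}, and the Riccati update force $\gamma$ to satisfy a polynomial identity whose admissible exponents must lie in the open interval $(1,b_1)$. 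The hypothesis $P(x)>0$ on $(1,b_1)$ rules out every such exponent, delivering the contradiction that establishes \eqref{wd}.

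The principal obstacle is the eigenvalue bound itself. Because the $f_j$'s may be very close to one another or intersect infinitely often, the Vandermonde-type separation that trivializes the pure-power case of \cite{lilam13} is unavailable here; $P_t^{-1}$ can become nearly singular on long stretches and its minimum eigenvalue is genuinely delicate to control. Overcoming this is exactly the content of Proposition \ref{tzz1}, whose proof must exploit the linear independence in Assumption A3 together with the $C^n$-smoothness of $\phi$ --- presumably via a Wronskian-type nondegeneracy --- to rule out destructive near-collinearity among the regressors $\{\phi(y_i)\}_{i<t}$. Once Proposition \ref{tzz1} is available, the polynomial-rule scaling argument essentially mirrors \cite{lilam13}, and Theorem \ref{1sta} follows.
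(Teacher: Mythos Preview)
Your controller choice and your identification of Proposition~\ref{tzz1} as the linchpin are both correct, and your diagnosis of the near-collinearity difficulty is on target. But the proposed route from Proposition~\ref{tzz1} to the polynomial rule has a genuine gap.

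The recursion you derive,
\[
y_{t+1}^2 \leq C\,\frac{|y_t|^{2b_1}}{\lambda_{\min}(P_t^{-1})} + 2w_{t+1}^2 + O(1),
\]
contains only the single exponent $b_1$; the bound $\|\phi(y_t)\|^2 = O(|y_t|^{2b_1})+O(1)$ throws away all information about $b_2,\dots,b_n$. There is then no mechanism by which your ``scaling argument'' can produce the full polynomial $P(x)=x^{n+1}-b_1x^n+(b_1-b_2)x^{n-1}+\cdots+b_n$, which depends on every $b_j$. In the paper the polynomial rule does \emph{not} enter through $\lambda_{\min}$ at all: it enters through a determinant expansion. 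One extracts a subsequence of regressors, forms auxiliary matrices $Q_k^{-1}$, and expands $|Q_{k+1}^{-1}|$ combinatorially to obtain a bound of the form
\[
|Q_{k+1}^{-1}| \leq (\text{polylog factors})\cdot \prod_{i=1}^{n}\left(\frac{|Q_{k+1-i}^{-1}|}{|Q_{k-i}^{-1}|}\right)^{b_i}.
\]
It is precisely this $n$-fold product that brings in all $n$ exponents, and taking logarithms of the ratio $z_k=\log|Q_{k+1}^{-1}|/\log|Q_k^{-1}|$ is what yields $P(z)\leq 0$ for some $z>1$, contradicting~\eqref{px}. Your single-step Cauchy--Schwarz bound cannot reproduce this structure.

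Two secondary issues. First, Proposition~\ref{tzz1} bounds $\lambda_{\min}(t+1)$ only in terms of $\sum_i 1/\sigma_{i-1}$, where $\sigma_{i-1}^2=\sigma^2+\phi_{i-1}^\tau P_{i-1}\phi_{i-1}$ depends on the very trajectory you are trying to control; invoking it as if it were an unconditional growth estimate is circular. The paper breaks this circularity by a dichotomy on whether $|P_t^{-1}|<(1+\sigma^{-2})^t$ holds infinitely often: on that event one combines Proposition~\ref{tzz1} with the AM--GM inequality and $\prod_i\sigma_{i-1}^2=\sigma^{2t}|P_t^{-1}|$ to get $\lambda_{\min}(t+1)\gtrsim t$ and then close an induction on $\sigma_t$; on the complementary event one runs the determinant argument above, where Proposition~\ref{tzz1} plays no role. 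Second, $V_t=\tilde\theta_t^\tau P_t^{-1}\tilde\theta_t$ is not a supermartingale in the stochastic setting; the standard Lai--Wei/Guo machinery controls $\sum_i(1+\phi_i^\tau P_i\phi_i)^{-1}(\tilde\theta_i^\tau\phi_i)^2=O(\log r_t)$, not $\sup_t V_t$. The paper avoids $V_t$ entirely and works with $\sigma_t$ and $|P_t^{-1}|$ directly.
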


\begin{example}
Under Assumptions A1--A2, consider system \dref{sys2}  with
\begin{eqnarray*}
f_{1}(x)=x^2\cos x\quad\mbox{and}\quad f_{2}(x)=x\sin x.
\end{eqnarray*}
The images of $f_{1}$ and $f_{2}$  intersect each other infinitely many times. The stabilizability issue of such systems cannot be covered by the existing theory.
Now,  applying Theorem \ref{1sta} with $b_1=2$ and $b_2=1$, we immediately conclude that the system is stablizable.  
\end{example}

For the sake of stabilizability,  Theorem \ref{1sta} requires that  system \dref{sys2}  grows no faster than some power function. On the other hand, for the scalar-parameter case,  \cite{liu2018} finds the corresponding system 
is possible to be stabilized when growing exponentially. But the number of the unknown parameters  affects the allowed growth rate of a stabilizable system  (see \cite{lilam13}). So,  we wonder wether a multi-parameter   stabilizable system still has a chance to grow exponentially? The following theorem gives an affirmative answer.

\begin{theorem}\label{2sta}
Under Assumptions A1--A3,  system \dref{sys21} is globally stabilizable if \\
 (i) for some $k_1,k_2>0$,
\begin{equation}\label{fk1k2}
\|\phi(x)\|\leqslant k_1 e^{k_2|x|},\quad \forall x\in \mathbb{R};
\end{equation}
(ii) there exists a number $L>0$ such that for $S_L\triangleq{\lbrace x: \|\phi(x)\|\leq L\rbrace}$,
\begin{equation}\label{sl}
\liminf_{l\rightarrow+\infty}\frac{\ell(S_L\cap[-l,l])}{l}>0,
\end{equation}
where $\ell$ denotes the Lebesgue measure.
\end{theorem}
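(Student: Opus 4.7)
The plan is to employ a Bayesian certainty-equivalent feedback that aims $y_{t+1}$ at a fixed reference point inside $S_L$ at every step, exploiting the conditional Gaussianity of the posterior of $\theta$ to translate condition (ii) into a quantitative excitation statement. By A1--A2 the posterior law of $\theta$ given $\mathcal{F}_t^y$ is Gaussian with mean $\hat\theta_t:=\E[\theta\mid\mathcal{F}_t^y]$ and covariance $P_t:=\Cov(\theta\mid\mathcal{F}_t^y)$, and $P_t^{-1}=I+\sigma^{-2}\sum_{i<t}\phi(y_i)\phi(y_i)^\tau$. Pick $y^\ast$ to be a Lebesgue density point of $S_L$ (which exists since (ii) forces $\ell(S_L)=\infty$) and use
\begin{equation*}
u_t = -\hat\theta_t^\tau \phi(y_t) + y^\ast.
\end{equation*}
Because $\theta-\hat\theta_t$ and $w_{t+1}$ are conditionally independent Gaussians given $\mathcal{F}_t^y$, one gets $y_{t+1}\mid\mathcal{F}_t^y \sim N(y^\ast, V_t)$ with $V_t:=\phi(y_t)^\tau P_t \phi(y_t)+\sigma^2$. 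This conditional Gaussianity is the central handle, since hitting probabilities of $S_L$ and tails of $|y_{t+1}|$ then depend only on $y^\ast$ and $V_t$.

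Next, the one-step hitting probability $p_t:=P(y_{t+1}\in S_L\mid\mathcal{F}_t^y)$ should be bounded below by a deterministic $p_0>0$. For $V_t$ bounded this uses that $y^\ast$ is a density point of $S_L$; for large $V_t$, the Gaussian is approximately uniform on an interval of width $O(\sqrt{V_t})$ around $y^\ast$ and condition (ii) supplies positive measure of $S_L$ of the correct order. A conditional Borel--Cantelli argument then gives $\liminf_t t^{-1}\#\{i\le t:y_i\in S_L\}>0$ almost surely. Along those $S_L$-visits the regressor $\phi(y_i)$ is bounded by $L$, so Proposition \ref{tzz1} combined with Assumption A3 forces $\lambda_{\min}(P_t^{-1})$ to grow at least linearly in $t$, yielding $\lambda_{\max}(P_t)=O(1/t)$ a.s.

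Finally, \dref{wd} follows by splitting $\sum_{i\le t} y_i^2$ according to whether $y_i\in S_L$. On the $S_L$-visits $V_i=O(L^2/t+\sigma^2)=O(1)$, so $y_{i+1}^2=O(\log t)$ on a full-probability event by Gaussian tails. Off $S_L$, condition (i) allows $\|\phi(y_i)\|$ to be exponentially large in $|y_i|$, but the bound $\lambda_{\max}(P_t)=O(1/t)$ combined with the Gaussian tail bound $|y_i|=O(\log t)$ a.s. keeps $V_i$ polynomially bounded. The main obstacle is exactly closing this bootstrap: an anomalously large $|y_i|$ produces an astronomically large $\|\phi(y_i)\|$ via $e^{k_2|y_i|}$, hence an astronomical $V_i$, which risks a runaway cascade in $|y_{i+1}|$. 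Handling it will require a stopping-time argument that propagates the $\lambda_{\max}(P_t)=O(1/t)$ bound forward past such rare excursions, using the Gaussian tail decay of $w_i$ together with the inequality of Proposition \ref{tzz1} to absorb the exponentially amplified events into a summable error.
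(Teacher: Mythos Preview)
Your plan through the linear growth of $\lambda_{\min}(P_t^{-1})$ is essentially the paper's argument: a uniform lower bound on $P(y_{t+1}\in S_L\mid\mathcal{F}_t^y)$, then a martingale LLN to get positive visit density, then Proposition~\ref{tzz1} (note that on an $S_L$-visit $\sigma_k^2\le\sigma^2+L^2$ since $\lambda_{\max}(P_k)\le 1$, so $\sum_{i\le t}1/\sigma_{i-1}\ge ct$). The nonzero reference $y^\ast$ is an unnecessary complication: the paper takes $y^\ast=0$, and condition~(ii) already delivers a scale-free lower bound on the hitting probability because for every $\sigma_{i-1}\ge\sigma$ the Gaussian sees a fraction $\ge q_2$ of $S_L$ inside $[-q_1\sigma_{i-1}/\sigma,q_1\sigma_{i-1}/\sigma]$. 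Using $y^\ast\neq 0$ also means Proposition~\ref{tzz1} as stated does not literally apply (its proof integrates against $N(0,\sigma_{i-1}^2)$), though the modification is routine.

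The genuine gap is the final step. Your bootstrap does not close: ``polynomially bounded $V_{i-1}$'' gives at best $|y_i|=O(V_{i-1}^{1/2}\log i)$, and feeding that through $\|\phi(y_i)\|\le k_1e^{k_2|y_i|}$ produces $V_i$ of order $e^{c\,i^{a/2}\log i}/i$, not a polynomial. No stopping-time patch will make a direct pathwise bound on $\sum y_i^2$ work here, because one bad $y_i$ can drive $\|\phi(y_i)\|$ beyond any polynomial. The paper avoids this entirely and argues by contradiction on $\sigma_t$. The missing lemma (the paper's Lemma~\ref{one1}) says: once $\liminf\lambda_{\min}(t+1)/t>0$, for any level $z>\sigma^2$ the upcrossing events $\{\sigma_k^2\le z,\ \sigma_{k+1}^2>z\}$ have conditional probability at most $\int_{|x|\ge c\log k}e^{-x^2/2}\,dx$, hence occur finitely often; so if $\sup_t\sigma_t=\infty$ then in fact $\sigma_t\to\infty$. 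But that contradicts $\sigma_k^2\le\sigma^2+L^2$ along the infinitely many $S_L$-visits. Therefore $\sup_t\sigma_t<\infty$ a.s. The passage from $\sup_t\sigma_t<\infty$ to $\sum_{i\le t}y_i^2=O(t)$ is then a one-line application of the Lai--Wei bound (the paper's Lemma~\ref{zh}):
\[
\sum_{i\le t}(y_{i+1}-w_{i+1})^2 \;\le\; \frac{\sup_i\sigma_i^2}{\sigma^2}\,O\!\left(\log\Big(1+\sum_{i\le t}\|\phi_i\|^2\Big)\right)
\;=\;O\!\left(\log t+\max_{i\le t}|y_i|\right)
\;=\;O\!\left(\Big(\sum_{i\le t}y_i^2\Big)^{1/2}\right),
\]
which together with $\sum w_i^2=O(t)$ yields $\sum y_i^2=O(t)$. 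Replace your splitting argument with this contradiction-plus-Lai--Wei route.
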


Clearly, $p_{L}\triangleq\liminf_{l\rightarrow+\infty}\frac{\ell(S_L\cap[-l,l])}{l}$  describes the proportion of $S_L$ in $\mathbb{R}$.  Since $p_L>0$ can be taken as small as one likes  in Theorem \ref{2sta},  a stabilizable  system may possess a very sparse $S_L$. We give an extreme example to illustrate it.


\begin{example}
Under Assumptions A1--A2, consider system \dref{sys2} with
\begin{eqnarray*}
f_{1}(x)=1+e^x\cdot I_{\lbrace sin x >-0.999\rbrace}\quad\mbox{and}\quad f_{2}(x)=e^{2x}\cdot I_{\lbrace sin x >-0.999\rbrace}.\nonumber
\end{eqnarray*}
Clearly, \dref{sl} holds for $L=1$. The system  is thus stabilizable by virtue of Theorem \ref{2sta}. We remark  that this system grows exponentially fast on  most part of the real line.
\end{example}

\section{Closed-loop Identification}\label{CLI}

In order to achieve the stabilization of system \dref{sys2}, we employ the self-tuning regulator (STR) based on the least-squares  (LS) algorithm. The standard LS estimate $\hat{\theta}_{t}$ for parameter $\theta$ can be recursively defined by
\begin{eqnarray}\label{LS2}
\left\{
\begin{array}{l}
\hat{\theta}_{t+1}=\hat{\theta}_{t}+\sigma^{-2}P_{t+1}\phi_t(y_{t+1}-u_t-\phi_t^\tau\hat{\theta}_{t})\\
P_{t+1}=P_t-(\sigma^2+\phi_t^\tau P_t\phi_t)^{-1}P_t\phi_t\phi_t^\tau P_t,~~P_0=I_{n}\\
\phi_t\triangleq{\phi(y_t)},~~t\geq0
\end{array},
\right.
\end{eqnarray}
where initial vectors $\hat{\theta}_{0}=\theta_0$ and $\phi_0$ are taken   random. In light of the ``certainty equivalence principle'', the controller is designed as follows:
\begin{equation} \label{ut}
u_t=-\hat{\theta}_{t}^\tau\phi_t,\quad t\geq0.
\end{equation}
We shall show in the next two sections that the LS-STR \dref{LS2}--\dref{ut} is the desired stabilizing controller for both Theorems \ref{1sta} and \ref{2sta}. Besides, during the control process, parameter $\theta$ can be identified simultaneously.
\begin{theorem}\label{1iden}
Under the conditions of Theorem \ref{1sta}, the LS estimator is strong consistent in the closed-loop  system \dref{sys2}, \dref{LS2} and \dref{ut}. More precisely,
$$
\|\hat{\theta}_{t+1}-\theta\|^2=O\left(\frac{\log t}{t}\right),\quad \mbox{a.s.}.
$$
\end{theorem}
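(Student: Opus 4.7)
The plan is to combine the stabilizability already delivered by Theorem \ref{1sta} with a classical Lai--Wei type martingale analysis of the least-squares algorithm in order to extract the announced $\log t/t$ rate. Inserting the certainty-equivalence control \dref{ut} into the system equation yields the closed-loop relation
\begin{equation*}
y_{t+1}=w_{t+1}-\tilde\theta_{t}^{\tau}\phi_{t},\qquad \tilde\theta_{t}\triangleq\hat\theta_{t}-\theta,
\end{equation*}
so that the plant output and the LS innovation coincide. Performing the standard Lyapunov computation on $V_{t}\triangleq\tilde\theta_{t}^{\tau}P_{t}^{-1}\tilde\theta_{t}$ and invoking a martingale convergence argument, one obtains on the stabilization event the a priori bound
\begin{equation*}
\tilde\theta_{t}^{\tau}P_{t}^{-1}\tilde\theta_{t}=O\!\left(\log\lambda_{\max}(P_{t}^{-1})\right)\quad\mbox{a.s.},
\end{equation*}
so the theorem reduces to a two-sided spectral estimate of the form $c\,t\le\lambda_{\min}(P_{t}^{-1})$ and $\lambda_{\max}(P_{t}^{-1})=O(t^{\gamma})$ a.s.

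For the upper bound I would use $\lambda_{\max}(P_{t}^{-1})\le n+\sigma^{-2}\sum_{i=0}^{t-1}\|\phi_{i}\|^{2}$ together with the growth condition $\|\phi(x)\|^{2}=O(|x|^{2b_{1}})+O(1)$ of Theorem \ref{1sta}. Since the averaged bound \dref{wd} alone is not quite enough when $b_{1}>1$, I would extract from the proof of Theorem \ref{1sta} a sample-path polynomial bound of the form $|y_{t}|=O(t^{\kappa})$ a.s., which is produced as a by-product of the argument there. This yields $\lambda_{\max}(P_{t}^{-1})=O(t^{\gamma})$ and hence $\log\lambda_{\max}(P_{t}^{-1})=O(\log t)$ a.s.

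For the lower bound, which is the core of the whole proof, I would appeal to Proposition \ref{tzz1}. The difficulty flagged in the introduction---that $f_{1},\ldots,f_{n}$ may come arbitrarily close to each other, or intersect infinitely often---makes the classical separation-based excitation arguments of \cite{lilam13} and \cite{xieguo1999} inapplicable, because along the closed loop the regressor vectors $\phi_{t}$ may appear nearly collinear. Proposition \ref{tzz1} is designed to circumvent exactly this issue: combining Assumption A3 with the Gaussian fluctuations injected by $\{w_{t}\}$ and by the prior on $\theta$, it quantifies how the stochastic excitation spreads through all coordinate directions, translating into a bound of the form $\lambda_{\min}(P_{t}^{-1})\ge c\,t$ a.s.\ for some constant $c>0$. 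Inserting the two spectral estimates into the Lyapunov bound above then gives
\begin{equation*}
\|\tilde\theta_{t+1}\|^{2}\le\frac{\tilde\theta_{t+1}^{\tau}P_{t+1}^{-1}\tilde\theta_{t+1}}{\lambda_{\min}(P_{t+1}^{-1})}=O\!\left(\frac{\log t}{t}\right)\quad\mbox{a.s.},
\end{equation*}
which is precisely the claimed rate.

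The principal obstacle is clearly the minimal-eigenvalue step: without Proposition \ref{tzz1} one cannot rule out a pathological near-collinearity of $\phi_{t}$ along the closed-loop trajectory which would destroy consistency in the very direction in which $\theta$ and $\hat\theta_{t}$ differ. The polynomial a priori bound on $|y_{t}|$ needed for the maximal-eigenvalue step is a comparatively mild technicality, but one that has to be read off from the internals of the proof of Theorem \ref{1sta} rather than cited from its statement.
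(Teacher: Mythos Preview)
Your outline matches the paper's proof, which is packaged as Lemma~\ref{c1}: combine the stability $\sum_i y_i^2=O(t)$ from Theorem~\ref{1sta} with the Lai--Wei bound \dref{jd}, using Proposition~\ref{tzz1} for the lower bound on $\lambda_{\min}$. Two minor points where the paper is simpler than your sketch: the averaged bound \dref{wd} \emph{does} suffice for the upper spectral estimate, via the elementary inequality $\sum_{i\le t}|y_i|^{2b_1}\le\bigl(\sum_{i\le t} y_i^{2}\bigr)^{b_1}=O(t^{b_1})$ for $b_1\ge1$, so no pointwise bound $|y_t|=O(t^\kappa)$ needs to be extracted; and Proposition~\ref{tzz1} does not by itself give $\lambda_{\min}\ge ct$ but only $\liminf_t\lambda_{\min}(t{+}1)/t\ge M\liminf_t t^{-1}\sum_{i\le t}\sigma_{i-1}^{-1}$, with positivity of the right side obtained from the AM--GM step $t^{-1}\sum_{i\le t}\sigma_{i-1}^{-1}\ge\sigma^{-1}|P_t^{-1}|^{-1/(2t)}\to\sigma^{-1}$ (equivalently, from $\sup_t\sigma_t<\infty$, which is the actual content of the proof of Theorem~\ref{1sta}).
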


\begin{theorem}\label{2iden}
Under the conditions of Theorem \ref{2sta}, the LS estimator is strong consistent in the closed-loop  system \dref{sys21}, \dref{LS2} and \dref{ut}. More precisely,
$$
\|\hat{\theta}_{t}-\theta\|^2=O(t^{-\frac{1}{2}}),\quad \mbox{a.s.}.
$$
\end{theorem}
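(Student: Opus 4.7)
The plan is to combine the martingale analysis of the least-squares recursion with the eigenvalue inequality of Proposition \ref{tzz1}, using the positive Lebesgue-density hypothesis \dref{sl} and the Gaussianity of the noise to produce the required excitation. Since Theorem \ref{2sta} already establishes stabilizability of the closed loop formed by \dref{sys21}, \dref{LS2} and \dref{ut}, I will freely use $\sup_t \frac{1}{t}\sum_{i=1}^t y_i^2<+\infty$ a.s.\ throughout.

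First I would derive the parameter-error dynamics. Writing $\tilde\theta_t\triangleq \hat\theta_t-\theta$ and substituting the certainty-equivalence law $u_t=-\hat\theta_t^\tau \phi_t$ into \dref{sys21} gives $y_{t+1}=-\tilde\theta_t^\tau \phi_t+w_{t+1}$, so the recursion in \dref{LS2} reduces to
\[
\tilde\theta_{t+1}=P_{t+1}P_t^{-1}\tilde\theta_t+\sigma^{-2}P_{t+1}\phi_t w_{t+1}.
\]
Setting $V_t=\tilde\theta_t^\tau P_t^{-1}\tilde\theta_t$ and invoking the standard Lai--Wei martingale inequality for self-tuning regulators yields $V_t=O(\log\det P_t^{-1})$ a.s. Using the exponential growth bound \dref{fk1k2} together with the $L^2$-stabilizability of $\{y_t\}$, $\log\det P_t^{-1}$ can be controlled by a slowly growing function of $t$ (no worse than a polynomial in $\log t$), which is the source of the logarithmic factor hidden in the final rate.

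The heart of the argument is a matching lower bound of the form $\lambda_{\min}(P_t^{-1})\geq c\sqrt{t}$ a.s., up to logarithmic corrections. This is precisely what Proposition \ref{tzz1} is designed to deliver once a suitable persistent excitation is in place. To produce that excitation, I would exploit that conditionally on $\mathcal{F}_t^y$ the next output $y_{t+1}$ is Gaussian with variance $\sigma^2$, so its conditional density at any fixed point of $\mathbb{R}$ admits an explicit lower bound. Combined with the positive-density hypothesis \dref{sl} and a conditional Borel--Cantelli argument, this shows that the visit set $\{i\leq t:y_i\in S_L\}$ has lower asymptotic density at least a positive multiple of $p_L$ on almost every sample path. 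On each such visit $\|\phi(y_i)\|\leq L$, while Assumption A3 together with $\phi\in C^n$ guarantees that the vectors $\phi(y_i)$ sweep every direction of $\mathbb{R}^n$. Feeding these two ingredients into Proposition \ref{tzz1} produces the desired $\sqrt{t}$-type lower bound for $\lambda_{\min}(P_t^{-1})$.

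Putting the two bounds together, $\|\tilde\theta_t\|^2\leq V_t/\lambda_{\min}(P_t^{-1})=O(t^{-1/2})$ a.s., which is exactly the claimed rate. The hard part will be the excitation step above: the hypothesis \dref{sl} is purely geometric, and converting it into a pathwise, full-rank excitation of $\{\phi_i\}$ requires simultaneously (i) controlling the conditional Gaussian fluctuations of $y_{t+1}$ given the past, (ii) preventing the adaptive drift $\tilde\theta_t^\tau\phi_t$ from pushing the conditional mean uniformly away from $S_L$, and (iii) extracting directional diversity from the bounded regressors on $S_L$ via Assumption A3. Once these points are secured, Proposition \ref{tzz1} packages them into the eigenvalue lower bound that closes the loop with the Lai--Wei inequality.
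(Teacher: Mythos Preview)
Your overall skeleton (Lai--Wei numerator over $\lambda_{\min}$ denominator, with Proposition \ref{tzz1} supplying the latter via visits to $S_L$) matches the paper, but the two rate estimates you plug in are both wrong, and only by accident do they combine to the right answer.

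\textbf{Numerator.} In the exponential setting of Theorem \ref{2sta} the bound ``no worse than a polynomial in $\log t$'' for $\log\det P_t^{-1}$ is false. From \dref{fk1k2} one only has $\sum_{i\le t}\|\phi(y_i)\|^2\le (t{+}1)k_1^2\exp(2k_2\max_{i\le t}|y_i|)$, so
\[
\log\Big(1+\sum_{i\le t}\|\phi(y_i)\|^2\Big)=O(\log t)+O\Big(\max_{i\le t}|y_i|\Big)=O\Big(\sqrt{\textstyle\sum_{i\le t}y_i^2}\Big)=O(\sqrt{t}),
\]
using only $\sum y_i^2=O(t)$ from Theorem \ref{2sta}. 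This $O(\sqrt t)$ is exactly what the paper uses and is the source of the exponent $-1/2$.

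\textbf{Denominator.} Proposition \ref{tzz1} together with the visit estimate (Lemma \ref{bl}) gives $\liminf_t \lambda_{\min}(t{+}1)/t>0$ a.s., i.e.\ \emph{linear} growth, not $c\sqrt t$. Concretely, $y_i\in S_L$ forces $\sigma_i^2\le\sigma^2+L^2$, and since a positive fraction of indices satisfy this, $\frac1t\sum_{i\le t}\sigma_{i-1}^{-1}$ stays bounded below; Proposition \ref{tzz1} then yields $\lambda_{\min}(t{+}1)\ge ct$. With your claimed $\sqrt t$ denominator and the correct $\sqrt t$ numerator you would get only $O(1)$, i.e.\ no consistency at all.

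Two smaller corrections: the conditional law of $y_{t+1}$ given $\mathcal F_t^y$ has variance $\sigma_t^2=\sigma^2+\phi_t^\tau P_t\phi_t$, not $\sigma^2$; and its conditional mean is exactly $0$ by \dref{mean}, so your concern (ii) about the adaptive drift pushing the mean away from $S_L$ does not arise.
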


It is worth mentioning that  for our situation,    the strong consistency of the LS estimates in the closed-loop system 
  can be guaranteed by the stability of the system. We now discuss it in details.

Let $\lambda_{\min}(t+1)$ be the minimal eigenvalue of $P_{t+1}^{-1}$ defined in \dref{LS2}. 
Under Assumptions A1--A2, \cite{ef63} and \cite{St77} imply
\begin{eqnarray}\label{77}
\left\lbrace \lim_{t\rightarrow+\infty}\lambda_{\min}(t+1)=+\infty\right\rbrace=\left\lbrace\lim_{t\rightarrow+\infty}\hat{\theta}_{t}=\theta\right\rbrace.
\end{eqnarray}
Furthermore, 
\cite{laiwei82} and \cite
{guo95} point out  that
\begin{equation}\label{jd}
\|\hat{\theta}_{t+1}-\theta\|^2=O\left(\frac{\log\left(1+\sum_{i=0}^{t}\|\phi(y_i)\|^2\right)}{\lambda_{\min}(t+1)}\right),\quad \mbox{a.s.},
\end{equation}
which  provides a powerful tool to estimate the convergence rate of $\lbrace \hat{\theta}_{t}\rbrace_{t\geq 0}$. 
By the help of \dref{jd}, we can derive the following lemma with the proof  stated in Appendix \ref{AppA}.
\begin{lemma}\label{c1}
Under Assumptions A1--A2, let the closed-loop system \dref{sys21}, \dref{LS2}, \dref{ut}  satisfy
\begin{equation}\label{bx}
\|\phi(x)\|=O(|x|^b)+O(1),\quad b>0.
\end{equation}
Then, there is a set $D$ with $P(D)=0$ such that
\begin{gather}\label{opti}
\left\lbrace\sum_{i=1}^{t}y_{i}^2=O(t)\right\rbrace \backslash D\subseteq  \left\lbrace\liminf_{t\rightarrow+\infty}\frac{\lambda_{\min}(t+1)}{t}>0
\right\rbrace\cap\left\lbrace\lim_{t\rightarrow+\infty}\frac{1}{t}\sum_{i=1}^{t}y_i^2=\sigma^2\right\rbrace,
\end{gather}
\begin{gather}\label{theconv}
\sum\nolimits_{i=1}^{t}y_i^2=O(t)\quad \mbox{implies}\quad \|\hat{\theta}_{t+1}-\theta\|^2=O\left(\frac{\log t}{t}\right),\quad \mbox{a.s.}.
\end{gather}

\end{lemma}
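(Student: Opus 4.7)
The plan is to exploit the closed-loop error equation obtained by inserting the controller \dref{ut} into \dref{sys21}:
\begin{equation*}
y_{t+1}=\tilde\theta_t^\tau\phi_t+w_{t+1},\qquad \tilde\theta_t\triangleq\theta-\hat\theta_t.
\end{equation*}
The estimation rate \dref{theconv} will fall out of the Lai--Wei--Guo bound \dref{jd} as soon as the persistence of excitation $\lambda_{\min}(t+1)\geq ct$ is known and $r_t\triangleq 1+\sum\|\phi(y_i)\|^2$ is shown to be polynomially bounded in $t$, the latter being immediate from \dref{bx} together with $\sum y_i^2=O(t)$. So the substance of the proof is \dref{opti}, which I would split into (a) the averaged-optimality claim $(1/t)\sum y_i^2\to\sigma^2$, and (b) the persistence-of-excitation claim $\liminf\lambda_{\min}(t+1)/t>0$.

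For (a), squaring and summing the error equation yields
\begin{equation*}
\sum_{i=0}^{t}y_{i+1}^2 = \sum_{i=0}^{t}(\tilde\theta_i^\tau\phi_i)^2 + 2\sum_{i=0}^{t}w_{i+1}\tilde\theta_i^\tau\phi_i + \sum_{i=0}^{t}w_{i+1}^2,
\end{equation*}
Kolmogorov's SLLN disposes of the last term as $\sigma^2 t+o(t)$, and Chow's martingale convergence theorem handles the cross term, using that the conditional second moments $\sigma^2(\tilde\theta_i^\tau\phi_i)^2$ are summable against $1/i^2$ by Abel summation against $\sum y_i^2=O(t)$. To reduce the first sum to $o(t)$ I would invoke the standard LS energy identity, obtained by computing $\tilde\theta_{t+1}^\tau P_{t+1}^{-1}\tilde\theta_{t+1}-\tilde\theta_t^\tau P_t^{-1}\tilde\theta_t$ and telescoping,
\begin{equation*}
\sum_{i=0}^{t}\frac{(\tilde\theta_i^\tau\phi_i)^2}{1+\sigma^{-2}\phi_i^\tau P_i\phi_i}=O(\log\det P_{t+1}^{-1})=O(\log t)\quad\mbox{a.s.,}
\end{equation*}
coupled with an index-splitting argument: thanks to $\sum\log(1+\sigma^{-2}\phi_i^\tau P_i\phi_i)=O(\log t)$, the times at which $\phi_i^\tau P_i\phi_i$ is large form a sparse set whose contribution can be absorbed against $\sum y_i^2=O(t)$, while on the complementary index set the denominator is bounded and the logarithmic bound transfers directly.

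The main obstacle is (b), i.e.\ showing $v^\tau\bigl(\sum_{i=0}^{t}\phi_i\phi_i^\tau\bigr)v\geq ct$ uniformly over unit $v\in\mathbb{R}^n$ on the event, up to a null set. The decisive observation is that, conditional on $\mathcal{F}_t^y$, the output $y_{t+1}$ is $N(\tilde\theta_t^\tau\phi_t,\sigma^2)$, so $\phi(y_{t+1})$ has a nondegenerate conditional law whose density is bounded below on any compact region. Since $\sum y_i^2=O(t)$ forces $|y_i|\leq M$ (hence $|\tilde\theta_{i-1}^\tau\phi_{i-1}|\leq M+|w_i|$) for a positive density of indices, one obtains a uniform lower bound
\begin{equation*}
\E\bigl[v^\tau\phi_{t+1}\phi_{t+1}^\tau v\bigm|\mathcal{F}_t^y\bigr]\geq c>0
\end{equation*}
along this subsequence, uniformly in unit $v$, where the linear independence of the coordinates of $\phi$ (Assumption A3, treated as a standing non-degeneracy hypothesis via the Remark) is what prevents the integral from degenerating. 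A matrix-valued martingale SLLN, applied for instance to the Frobenius-norm deviation of $\sum\phi_i\phi_i^\tau$ from its conditional-mean counterpart and combined with an $\epsilon$-net over the unit sphere in $\mathbb{R}^n$, then converts this expectational bound into the almost-sure linear lower bound on $\lambda_{\min}$.

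With \dref{opti} established, \dref{theconv} is immediate: substituting $\log r_t=O(\log t)$ and $\lambda_{\min}(t+1)\geq ct$ into \dref{jd} delivers $\|\hat\theta_{t+1}-\theta\|^2=O(\log t/t)$ a.s.\ on $\{\sum y_i^2=O(t)\}\setminus D$. The exceptional null set $D$ is the countable union of the a.s.\ exceptional events produced along the way in Kolmogorov's SLLN, Chow's theorem, the LS-energy estimate, and the matrix-martingale step.
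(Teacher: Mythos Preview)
Your proposal contains a genuine error that undermines both (a) and (b). You write that, conditional on $\mathcal{F}_t^y$, the output $y_{t+1}$ is $N(\tilde\theta_t^\tau\phi_t,\sigma^2)$. But $\tilde\theta_t=\theta-\hat\theta_t$ is \emph{not} $\mathcal{F}_t^y$-measurable, since $\theta$ is random (Assumption~A2). Under A1--A2 the LS recursion \dref{LS2} is exactly the Kalman filter, so $\hat\theta_t=E[\theta\mid\mathcal{F}_t^y]$ and $P_t=E[\tilde\theta_t\tilde\theta_t^\tau\mid\mathcal{F}_t^y]$; consequently the correct conditional law is $y_{t+1}\mid\mathcal{F}_t^y\sim N(0,\sigma_t^2)$ with $\sigma_t^2=\sigma^2+\phi_t^\tau P_t\phi_t$. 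This is the structural fact the paper exploits throughout.

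With the correct conditional law, your route to (b) runs into a circularity: to lower-bound the conditional density of $y_{t+1}$ on a fixed compact you need $\sigma_t$ bounded above, but $\sup_t\sigma_t<\infty$ is essentially equivalent to the stability assertion you are trying to use (this is the content of Lemma~\ref{dj}). The paper breaks this circle by Proposition~\ref{tzz1}, which gives $\liminf_t\lambda_{\min}(t{+}1)/t\geq M\liminf_t t^{-1}\sum_{i=1}^t\sigma_{i-1}^{-1}$, and then applies the AM--GM inequality together with the telescoping identity $\prod_{i=1}^t\sigma_{i-1}^2=\sigma^{2t}|P_t^{-1}|$ and a Hadamard bound $|P_t^{-1}|=O(t^{(b+1)n})$ on $W$; this yields $\liminf\lambda_{\min}(t{+}1)/t>0$ without any a~priori bound on $\sigma_t$.

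Your argument for (a) also has a gap. The index-splitting gives $\sum_{\text{small}}(\tilde\theta_i^\tau\phi_i)^2=O(\log t)$, but on the ``large'' set (of size $O(\log t)$) you only control $\sum(\tilde\theta_i^\tau\phi_i)^2$ through $\sum y_{i+1}^2=O(t)$, which yields $O(t)$, not the $o(t)$ needed for $\tfrac1t\sum y_i^2\to\sigma^2$. The paper instead first establishes (b), uses it to show $\sigma_t\to\sigma$ via $\sigma_{t+1}^2\leq\sigma^2+\|\phi(y_{t+1})\|^2/\lambda_{\min}(t{+}1)$ and the Borel--Cantelli bound $|y_t|\leq\sigma_{t-1}\log t$, and then applies a martingale SLLN to $Z_i=i^{-1}(y_i^2-\sigma_{i-1}^2)$ (whose conditional variances are $2\sigma_{i-1}^4/i^2$, summable once $\sup_t\sigma_t<\infty$) to conclude $\tfrac1t\sum y_i^2\to\tfrac1t\sum\sigma_{i-1}^2\to\sigma^2$. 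Once \dref{opti} is in hand, your derivation of \dref{theconv} from \dref{jd} is correct.
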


\begin{remark}
By virtue of \dref{opti} in Lemma \ref{c1}, 
$$
\sum_{i=1}^{t}y_{i}^2=O(t) \quad \mbox{is equaivalent to}\quad   \lim_{t\rightarrow+\infty}\frac{1}{t}\sum_{i=1}^{t}y_i^2=\sigma^2 \quad \mbox{a.s.},
$$
which is referred to as optimality (see \cite{guo95}). Meanwhile, \dref{theconv} suggests that  Theorem     \ref{1iden} is a direct result of  Theorem     \ref{1sta}.
\end{remark}

For systems growing exponentially, we remark that the proof of Theorem \ref{2sta} indicates
\begin{eqnarray}\label{inflam}
\liminf_{t\rightarrow+\infty}\frac{\lambda_{\min}(t+1)}{t}>0,\quad \mbox{a.s.}.
\end{eqnarray}
Hence,  \dref{jd} infers
\begin{eqnarray*}
\|\hat{\theta}_{t+1}-\theta\|^2=O\left(\frac{\log\left(1+\sum_{i=0}^{t}\|\phi(y_i)\|^2\right)}{t}\right)
=O\left(\frac{\sqrt{\sum_{i=1}^{t}y_{i}^2}}{t}\right),\quad \mbox{a.s.}.\nonumber
\end{eqnarray*}
Consequently, it is straightforward that
\begin{lemma}\label{c2}
If the conditions of Theorem \ref{2sta} hold, then in the closed-loop system \dref{sys21}, \dref{LS2}, \dref{ut},
$$ \sum\nolimits_{i=1}^{t}y_i^2=O(t)\quad \mbox{implies}\quad \|\hat{\theta}_{t}-\theta\|^2=O(t^{-\frac{1}{2}}),\quad \mbox{a.s.}.$$
\end{lemma}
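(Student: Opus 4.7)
The plan is to read Lemma \ref{c2} as a direct specialization of the chain of estimates already displayed in the remark preceding its statement. Namely, once \dref{inflam} is in hand (the text credits it to the proof of Theorem \ref{2sta}), the general least-squares bound \dref{jd} of Lai and Wei collapses to
\begin{equation*}
\|\hat\theta_{t+1} - \theta\|^2 = O\!\left(\dfrac{\log\bigl(1 + \sum_{i=0}^{t} \|\phi(y_i)\|^2\bigr)}{t}\right), \quad \mbox{a.s.},
\end{equation*}
so that only the numerator has to be controlled on the event $\{\sum_{i=1}^{t} y_i^2 = O(t)\}$.

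To do that, I would first observe that $\sum_{i=1}^{t} y_i^2 = O(t)$ implies $\max_{0 \leq i \leq t} |y_i| = O(\sqrt{t})$ a.s., after separately absorbing the fixed initial value $y_0$. The exponential growth assumption \dref{fk1k2}, $\|\phi(x)\| \leq k_1 e^{k_2|x|}$, then gives
\begin{equation*}
\sum_{i=0}^{t} \|\phi(y_i)\|^2 \leq (t+1)\, k_1^2\, e^{2k_2 \max_i |y_i|} = e^{O(\sqrt{t})},
\end{equation*}
whence $\log\bigl(1 + \sum_{i=0}^{t} \|\phi(y_i)\|^2\bigr) = O(\sqrt{t})$ a.s. Plugging this into the displayed bound yields $\|\hat\theta_{t+1}-\theta\|^2 = O(\sqrt{t}/t) = O(t^{-1/2})$ a.s., and the harmless index shift from $t+1$ to $t$ delivers the claimed rate.

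Because \dref{inflam} is quoted from the proof of Theorem \ref{2sta} and \dref{jd} is quoted from the cited literature, no genuine obstacle remains for Lemma \ref{c2} itself; the delicate design of the hypothesis \dref{fk1k2}, whose exponential growth rate is precisely absorbed by the logarithm in \dref{jd}, is what makes the argument this short. The only step that would require real work is \dref{inflam}, but this is reserved for the proof of Theorem \ref{2sta}, where the full strength of \dref{sl} enters through the ``tiny-fraction'' argument advertised after that theorem.
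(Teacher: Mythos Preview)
Your proposal is correct and mirrors the paper's own argument almost verbatim: the paper also quotes \dref{inflam} from the proof of Theorem \ref{2sta}, plugs it into \dref{jd}, and then bounds $\log\bigl(1+\sum_{i=0}^{t}\|\phi(y_i)\|^2\bigr)$ by $O\bigl(\sqrt{\sum_{i=1}^{t} y_i^2}\bigr)$ via \dref{fk1k2}, from which the $O(t^{-1/2})$ rate follows immediately on $\{\sum_{i=1}^{t} y_i^2 = O(t)\}$. Your explicit intermediate step $\max_i|y_i|=O(\sqrt{t})$ is just the unwinding of that same bound.
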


The remainder of the proof is thus   focused on the stability of the closed-loop system  \dref{sys21}, \dref{LS2} and \dref{ut}.
We shall see that \dref{inflam} plays a core role not only in the above closed-loop identification, but also in justifying Theorems \ref{1sta}--\ref{2sta}. We close this section by presenting an important proposition, whose proof is included in Appendix \ref{AppA}.
\begin{proposition}\label{tzz1}
Under Assumptions A1--A3, there is a constant $M>0$ such that in the closed-loop system \dref{sys21}, \dref{LS2}, \dref{ut}, 
\begin{eqnarray*}
\liminf_{t\rightarrow+\infty}\frac{\lambda_{\min}(t+1)}{t}\geq M \liminf_{t\rightarrow+\infty}\frac{1}{t}\sum_{i=1}^{t}\frac{1}{\sigma_{i-1}}\quad \mbox{a.s.}.
\end{eqnarray*}
\end{proposition}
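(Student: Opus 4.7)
The plan is to reduce the proposition to a per-step excitation estimate on the conditional information matrix. Applying the matrix inversion lemma to the Riccati update \dref{LS2} gives $P_{t+1}^{-1}=I_n+\sigma^{-2}\sum_{i=0}^{t}\phi(y_i)\phi(y_i)^\tau$, so for every unit vector $v\in\mathbb{R}^n$, $v^\tau P_{t+1}^{-1}v\geq \sigma^{-2}\sum_{i=0}^t(v^\tau\phi(y_i))^2$. Under A1--A2 the joint Gaussianity of noise and prior makes the posterior of $\theta$ given $\mathcal{F}_{i-1}^y$ equal to $N(\hat\theta_{i-1},P_{i-1})$; combining this with the certainty-equivalent law \dref{ut}, the identity $y_i=(\theta-\hat\theta_{i-1})^\tau\phi_{i-1}+w_i$ shows that $y_i\mid\mathcal{F}_{i-1}^y$ is centered Gaussian whose conditional scale parameter $\sigma_{i-1}$ satisfies $\sigma_{i-1}\geq\sigma$ and whose density at $0$ is of order $1/\sigma_{i-1}$. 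The claim will follow if I can produce a constant $M_0>0$ with $\lambda_{\min}(\Psi_{i-1})\geq M_0/\sigma_{i-1}$ a.s., where $\Psi_{i-1}\triangleq E[\phi(y_i)\phi(y_i)^\tau\mid\mathcal{F}_{i-1}^y]$, and then transfer this estimate from $\sum\Psi_{i-1}$ to $\sum\phi_i\phi_i^\tau$ via a martingale argument.

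For the conditional excitation lemma I would exploit A3 as follows. Since $f_1,\ldots,f_n$ are linearly independent on the open set $E$, continuity of the linear functional $v\mapsto v^\tau\phi(\cdot)$ together with compactness of $S^{n-1}$ yield a finite test set $\{x_1,\ldots,x_N\}\subset E$, a radius $r>0$, and $\delta>0$ such that for every unit $v\in\mathbb{R}^n$ there exists $k(v)$ with $|v^\tau\phi(y)|\geq \delta/2$ for all $y\in B(x_{k(v)},r)$. Because $y_i\mid\mathcal{F}_{i-1}^y$ is centered Gaussian with scale $\sigma_{i-1}$, its density on the bounded set $\bigcup_k B(x_k,r)$ is lower bounded by $c_0/\sigma_{i-1}$ when $\sigma_{i-1}$ is large and by a positive constant depending only on $\sigma$ when $\sigma_{i-1}$ is small; using $\sigma_{i-1}\geq\sigma>0$, both regimes deliver some $M_1>0$ with $P(y_i\in B(x_{k(v)},r)\mid\mathcal{F}_{i-1}^y)\geq M_1/\sigma_{i-1}$. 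Integrating $(v^\tau\phi(y_i))^2\geq(\delta/2)^2$ on this event and minimizing over unit $v$ yields $\lambda_{\min}(\Psi_{i-1})\geq M_0/\sigma_{i-1}$ with $M_0=M_1(\delta/2)^2$.

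Finally, for each fixed unit $v$ the sequence $(v^\tau\phi(y_i))^2-v^\tau\Psi_{i-1}v$ is a martingale difference with respect to $\{\mathcal{F}_i^y\}$, so a Chow/Lai--Wei type strong law gives $\sum_{i=1}^t[(v^\tau\phi_i)^2-v^\tau\Psi_{i-1}v]=o(\sum_{i=1}^t v^\tau\Psi_{i-1}v)$ almost surely. To lift this pointwise-in-$v$ statement to the matrix bound $\lambda_{\min}(\sum_i\phi_i\phi_i^\tau)\geq M_0\sum_i\sigma_{i-1}^{-1}+o(\sum_i\sigma_{i-1}^{-1})$, I would cover $S^{n-1}$ by a finite $\varepsilon$-net and use the quadratic Lipschitz estimate $|(v^\tau\phi_i)^2-(v'^\tau\phi_i)^2|\leq 2\|v-v'\|\|\phi_i\|^2$ to transfer the almost-sure bound uniformly in $v$; dividing by $t$ and taking the liminf then delivers the proposition with $M=\sigma^{-2}M_0$. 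The main obstacle is the quantitative density estimate in the excitation lemma: the naive Gaussian tail bound is too weak, and one must carefully separate the bounded and unbounded regimes of $\sigma_{i-1}$ to obtain the required $1/\sigma_{i-1}$ rate, while simultaneously arranging that the uniformization over $v$ does not spoil the error term in the martingale strong law.
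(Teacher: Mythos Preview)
Your overall architecture matches the paper's: use A3 to manufacture, for every unit $v$, a bounded region on which $|v^\tau\phi|$ is uniformly bounded away from zero; use the centered-Gaussian conditional law of $y_i$ to show this region is visited with conditional probability at least a constant times $1/\sigma_{i-1}$; then pass from conditional probabilities to realized sums by a martingale strong law, uniformly in $v$. The paper carries this out by lower-bounding $\sum_{i\le t}(v^\tau\phi_i)^2$ by $\delta^2\sum_{i\le t} I_{\{y_i\in U_v\}}$ for a bounded set $U_v\subset\bigcup_j S_j(q)$, and then applying the SLLN to the \emph{bounded} martingale differences $I_{\{y_i\in U_\epsilon\}}-P(y_i\in U_\epsilon\mid\mathcal F_{i-1}^y)$. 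Uniformity in $v$ is obtained because the sets $U_\epsilon$ range over a \emph{finite} family (a discretization of the bounded region $\bigcup_j S_j(q)$), so one intersects finitely many almost-sure events.

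Your proposal diverges at exactly this point, and that is where the gap lies. You apply the martingale SLLN to the differences $(v^\tau\phi_i)^2-v^\tau\Psi_{i-1}v$ and then uniformize over $v$ via an $\varepsilon$-net with the Lipschitz bound $|(v^\tau\phi_i)^2-(v'^\tau\phi_i)^2|\le 2\|v-v'\|\,\|\phi_i\|^2$. But Proposition~\ref{tzz1} is stated under A1--A3 only, with \emph{no} growth hypothesis on $\phi$; in particular $E[(v^\tau\phi_i)^4\mid\mathcal F_{i-1}^y]$ can be infinite and $\sum_{i\le t}\|\phi_i\|^2$ can dwarf $t$ or $\sum_i 1/\sigma_{i-1}$, so neither the Chow/Lai--Wei conditions for your SLLN nor the net error term are controlled. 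The remedy is precisely the paper's move: drop from $(v^\tau\phi_i)^2$ to $\delta^2 I_{\{y_i\in U_v\}}$ (or equivalently to $(v^\tau\phi_i)^2 I_{\{y_i\in B\}}$ for your fixed bounded $B=\bigcup_k B(x_k,r)$) before invoking any limit theorem. With that truncation the martingale increments become bounded, the SLLN is automatic, and uniformity reduces to a finite intersection rather than a net-plus-$\|\phi_i\|^2$ argument. Once you make that change your proof is essentially the paper's.
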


\section{Proof of Theorem \ref{1sta}}\label{soc}

For the closed-loop system \dref{sys2}, \dref{LS2} and \dref{ut}, one has
\begin{eqnarray}\label{yt}
&&P_{t+1}^{-1}=I_{n}+\frac{1}{\sigma^2}\sum_{i=0}^{t}\phi_i\phi
_i^{\tau},\nonumber\\
&&y_{t+1}=\tilde{\theta_t}f(y_t)+w_{t+1},
\end{eqnarray}
where $\tilde{\theta_t}\triangleq\theta-\hat{\theta}_{t}$, $t\geq0$. Since the LS algorithm \dref{LS2} is exactly  the standard Kalman filter in our case, it yields that $\hat{\theta}_{t}=E[\theta|\mathcal{F}_t^y]$ and $P_t=E[\tilde{\theta_t}^\tau\tilde{\theta_t}|\mathcal{F}_t^y]$. Hence,  for each $t\geq0$,  $y_{t+1}$ possesses a  conditional    Gaussian distribution given $\mathcal{F}_t^y$. The conditional mean and variance are
\begin{eqnarray}\label{mean}
&&m_t\triangleq{E[y_{t+1}|\mathcal{F}_t^y]}=u_t+\hat{\theta}_{t}^\tau\phi_t=0\\
&&\label{var}
\sigma_t^2\triangleq {Var(y_{t+1}|\mathcal{F}_t^y)}=\sigma^2+\phi_t^\tau P_t\phi_t=\sigma^2\cdot\frac{|P_{t+1}^{-1}|}{|P_{t}^{-1}|},\quad\mbox{a.s.}.
\end{eqnarray}
So, we shall make  efforts to prove $\sup_{t}\sigma_t<+\infty$ almost surely. To this end,
 we provide several relevant lemmas. 

\begin{lemma}\label{one}
Under the conditions of Theorem \ref{1sta},  assume that events $\lbrace |P_{t}^{-1}|< (1+\sigma^{-2})^{t}\rbrace_{t\geq 1}$ occur finitely on some set $D$ with probability $P(D)>0$,
 then
\begin{equation}\label{yxd}
\sup_{t}\sigma_t<+\infty,\quad\mbox{a.s.}\quad\mbox{on}~D.
\end{equation}
\end{lemma}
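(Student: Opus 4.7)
The plan is to exploit the determinantal hypothesis in combination with Proposition~\ref{tzz1} and the polynomial rule~\dref{px} to establish uniform boundedness of $\sigma_t$ on $D$.

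First, I would rewrite the hypothesis via the ratio identity~\dref{var}. Since $P_0=I_n$ and $\sigma_t^2=\sigma^2|P_{t+1}^{-1}|/|P_t^{-1}|$, telescoping gives $|P_t^{-1}|=\prod_{i=0}^{t-1}\sigma_i^2/\sigma^2$, so on $D$ the hypothesis reads $\prod_{i=0}^{t-1}(\sigma_i^2/\sigma^2)\geq (1+\sigma^{-2})^t$ for all sufficiently large $t$. This says the geometric mean of $\sigma_i^2/\sigma^2$ eventually exceeds $1+\sigma^{-2}$, so the regressor is persistently exciting along the trajectory.

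Second, I would apply Proposition~\ref{tzz1} to convert a tentative upper bound $\sigma_i\leq K$ into an eigenvalue growth rate $\lambda_{\min}(t+1)\geq Mt/K$, hence $\lambda_{\max}(P_{t+1})=O(1/t)$. In parallel, using the conditional law $y_{t+1}\mid\mathcal{F}_t^y\sim N(0,\sigma_t^2)$ from \dref{mean}--\dref{var} and a conditional Borel--Cantelli argument, I would obtain $|y_{t+1}|=O(\sigma_t\sqrt{\log t})$ a.s., and then $f_j(x)=O(|x|^{b_j})+O(1)$ yields $\|\phi_{t+1}\|^2=O(\sigma_t^{2b_1}(\log t)^{b_1})$.

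Third, combining these two estimates in $\sigma_{t+1}^2=\sigma^2+\phi_{t+1}^\tau P_{t+1}\phi_{t+1}\leq\sigma^2+\lambda_{\max}(P_{t+1})\|\phi_{t+1}\|^2$ produces a recursive inequality of the rough form
\[ \sigma_{t+1}^2 \leq \sigma^2 + C\,\sigma_t^{2b_1}(\log t)^{b_1}/t. \]
The polynomial rule~\dref{px}, refined to track the separate contributions of all $n$ exponents $b_1>\cdots>b_n$ through the spectral decomposition of $P_t$, is precisely the condition under which this recursion (together with its lower-order analogues corresponding to the smaller exponents) cannot admit an unbounded solution. This would deliver $\sup_t\sigma_t<\infty$ on $D$.

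The main obstacle I foresee is the bootstrap in the second step: the eigenvalue estimate furnished by Proposition~\ref{tzz1} already requires an a priori upper bound on the $\sigma_i$, which is precisely the conclusion we are after. I expect this to be resolved by arguing along a sequence of stopping times $\tau_K=\inf\{t:\sigma_t>K\}$: on $\{\tau_K=\infty\}$ the claim is immediate, while on $\{\tau_K<\infty\}\cap D$ one must use the polynomial rule together with the $\|\phi_{t+1}\|$-estimate to show that after $\tau_K$ the recursion contracts $\sigma_t$ back below $K$ at a controlled rate, and then combine this contraction with the determinantal hypothesis to rule out persistence of the excursion on any positive-probability subset of $D$.
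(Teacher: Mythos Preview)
Your proposal has a genuine gap, and in fact it is aimed at the wrong lemma. The route through Proposition~\ref{tzz1} and the AM--GM inequality on $\prod\sigma_i$ is precisely how the paper proves Lemma~\ref{two}, where the standing hypothesis is $|P_t^{-1}|<(1+\sigma^{-2})^t$ infinitely often. There the bound $(\prod_{i<t}\sigma_i)^{1/t}<\sqrt{\sigma^2+1}$ combines with AM--GM to give $\frac{1}{t}\sum_i 1/\sigma_{i-1}\geq 1/\sqrt{\sigma^2+1}$, and Proposition~\ref{tzz1} then yields $\lambda_{\min}(t+1)\geq\varepsilon t$. In Lemma~\ref{one} the hypothesis is the \emph{opposite} inequality $|P_t^{-1}|\geq(1+\sigma^{-2})^t$ eventually, so the geometric mean of the $\sigma_i$ is bounded \emph{below} and AM--GM yields no useful lower bound on $\sum 1/\sigma_{i-1}$. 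Thus your second step cannot be launched from the hypothesis, and the bootstrap difficulty you flag is not a technicality but a symptom of this mismatch.

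There is a second, independent gap: your one-step recursion $\sigma_{t+1}^2\leq\sigma^2+C\sigma_t^{2b_1}(\log t)^{b_1}/t$ sees only the leading exponent $b_1$, and the analysis of such a scalar recursion gives at best the $n=1$ criterion $b_1<4$. The multi-parameter polynomial rule~\dref{px} cannot be recovered from a spectral bound on $P_t$; it requires the full determinantal structure. The paper's proof does not use Proposition~\ref{tzz1} at all here. Instead it introduces auxiliary matrices $Q_k^{-1}=Q_{k-1}^{-1}+\sigma^{-2}\phi_{t_{k-1}}\phi_{t_{k-1}}^\tau$ along a subsequence $\{t_k\}$ of record times for $\phi_t^\tau Q_k\phi_t$, expands $|Q_{k+1}^{-1}|$ multilinearly as a sum of $n\times n$ minors, and bounds each minor using $|f_j(y_{t_s})|\leq L_1+L_2|y_{t_s}|^{b_j}$ together with $|y_t|\leq\sigma_{t-1}\log t$. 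This produces the $n$-step multiplicative recursion
\[
|Q_{k+1}^{-1}|\leq C_k\prod_{i=1}^{n}\Bigl(|Q_{k+1-i}^{-1}|/|Q_{k-i}^{-1}|\Bigr)^{b_i},
\]
which after taking logarithms and setting $z_k=\log|Q_{k+1}^{-1}|/\log|Q_k^{-1}|$ forces $P(\liminf z_k)\leq 0$, contradicting~\dref{px}. The contradiction shows $|Q_{k+1}^{-1}|\leq t_k^{\sqrt{\log t_k}}$ infinitely often, hence $|P_t^{-1}|<(1+\sigma^{-2})^t$ infinitely often, against the hypothesis. It is this determinantal expansion over $n$ distinct time indices that encodes all exponents $b_1,\dots,b_n$ and makes the polynomial rule appear; your scalar recursion on $\sigma_t$ cannot reproduce it.
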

\begin{proof}
At first, we define some random matrices:
\begin{eqnarray*}
\left\{
\begin{array}{l}
Q_{0}^{-1}=I_n\\
Q_{k}^{-1}=Q_{k-1}^{-1}+\frac{1}{\sigma^2}\cdot\phi_{t_{k-1}}\phi_{t_{k-1}}^{\tau},\quad k\geq 1
\end{array},
\right.
\end{eqnarray*}
where the random subscript $t_{k}$ with $t_{0}=0$ satisfies
\begin{eqnarray*}
\left\{
\begin{array}{l}
\phi_{t_{k}}^{\tau}Q_{k}\phi_{t_{k}}>\phi_{t_{k-1}}^{\tau}Q_{k-1}\phi_{t_{k-1}}\\
\phi_{t}^{\tau}Q_{k}\phi_{t}\leq\phi_{t_{k-1}}^{\tau}Q_{k-1}\phi_{t_{k-1}},\quad ~~  t_{k-1}<t<t_{k}
\end{array},\quad k\geq 1.
\right.
\end{eqnarray*}
If $\lbrace t_k\rbrace$ is a finite sequence, then there is a $k$ such that
\begin{eqnarray*}
\phi_{t}^{\tau}Q_{k+1}\phi_{t}\leq\phi_{t_{k}}^{\tau}Q_{k}\phi_{t_{k}},\quad \forall ~t>t_{k}.
\end{eqnarray*}
Consequently,
\begin{eqnarray*}
\sigma_{t}^2&=&\sigma^2\cdot\frac{|P_{t+1}^{-1}|}{|P_{t}^{-1}|}=\sigma^2+\phi_{t}^{\tau}P_{t}\phi_{t}\nonumber\\
&\leq &\sigma^2+\phi_{t}^{\tau}Q_{k+1}\phi_{t}\leq \sigma^2+\phi_{t_{k}}^{\tau}Q_{k}\phi_{t_{k}},\quad \forall ~t>t_{k},\nonumber
\end{eqnarray*}
which leads to $\sup_{t}\sigma_t<+\infty$.

Now, we assume that there exists a set $D'\subset D$ with $P(D')>0$ such that $\lbrace t_k\rbrace$ is  infinite on $D'$. Clearly,
\begin{eqnarray*}
\frac{|Q_{k}^{-1}|}{|Q_{k-1}^{-1}|}&=& \sigma^2+\phi_{t_{k-1}}^{\tau}Q_{k-1}\phi_{t_{k-1}}\nonumber\\
&<&\sigma^2+\phi_{t_{k}}^{\tau}Q_{k}\phi_{t_{k}}=\frac{|Q_{k+1}^{-1}|}{|Q_{k}^{-1}|},\quad k\geq 1.
\end{eqnarray*}
Similarly to \cite[Lemma 3.1]{lilam13}, we can prove that for any $t\in(t_{k-1},t_{k}]$,
\begin{eqnarray*}
\frac{|P_{t}^{-1}|}{|P_{t-1}^{-1}|}\leq \frac{|Q_{k}^{-1}|}{|Q_{k-1}^{-1}|}.
\end{eqnarray*}
On the other hand, since
\begin{eqnarray}
\sum_{t=1}^{+\infty}P(|y_t|> \sigma_{t-1}\log t |\mathcal{F}_{t-1}^{y})=\frac{1}{\sqrt{2\pi}}\sum_{t=1}^{+\infty}\int_{|x|\geq \log t}e^{-\frac{x^2}{2}}\,dx<+\infty,\nonumber
\end{eqnarray}
by \textit{Borel-Cantelli-Levy} theorem, the events $\lbrace |y_t|> \sigma_{t-1}\log t\rbrace$ occur only finite many times for  $t\geq 1$. That is,  for all  sufficiently large $t$,
\begin{equation}\label{ylog}
|y_t|^2\leq \sigma_{t-1}^2\log^2 t=\sigma^2\cdot\frac{|P_{t}^{-1}|}{|P_{t-1}^{-1}|}\log^2 t,\quad\mbox{a.s.},
\end{equation}
which infers that there exists a random number $\gamma>1$ such that for all  $t\geq 0$,
$$|y_t|^2\leq \gamma\cdot\sigma_{t-1}^2\log^2 (t+3),\quad\mbox{a.s.}.$$

Next, for $d\geq1$, we define
\begin{equation}
\alpha_d(j)\triangleq {\frac{1}{\sigma^2}\cdot(f_{1}(y_{t_{j}})f_{d}(y_{t_{j}}),\ldots,f_{n}(y_{t_{j}})f_{d}(y_{t_{j}}))^{\tau}},\quad j\geq 0,\nonumber
\end{equation}
and $\alpha_d(-1)\triangleq {e_d}$,  where $ e_d$ is the $d\mbox{th}$ column of the identity matrix $I_{n}$.
Then,
\begin{eqnarray}\label{detr1}
|Q_{k+1}^{-1}|&=&\det\left(\sum_{j=-1}^{k}\alpha_1(j),\ldots,\sum_{j=-1}^{k}\alpha_n(k)\right)\nonumber\\
&=& \sum_{s_1,\ldots,s_{n}=-1}^{k}\det\left(\alpha_1(s_1),\ldots,\alpha_n(s_{n})\right).
\end{eqnarray}
If there exist two subscripts $d\neq d'$ such that $s_d=s_{d'}\neq -1$, we obtain
\begin{equation*}
\det\left(\alpha_1(s_1),\ldots,\alpha_{n}(s_{n})\right)=0,
\end{equation*}
and hence
\begin{eqnarray}\label{detr}
|Q_{k+1}^{-1}|&=& \sum_{(s_1,\ldots,s_{n})\in \mathcal{W}(k)}\det\left(\alpha_1(s_1),\ldots,\alpha_{n}(s_{n})\right).
\end{eqnarray}
Here,  given integer $k\geq 1$ and positive integers $l_1<\cdots<l_m$,
\begin{eqnarray}
\mathcal{W}(k)&\triangleq &\lbrace(l_1,\ldots,l_{n}):l_i\in\lbrace-1,0,\ldots,k\rbrace,i\in[1,n];\nonumber\\
&& ~~~~~~~~~~~~~~~~~~~~~~~~~~~~~~ l_i\neq l_{i'} ~\mbox{if} ~i\neq i',~ l_{i'}\not=-1\rbrace,\nonumber
\end{eqnarray}
\begin{eqnarray}
\mathcal{H}_k^{(l_1,\ldots,l_m)}&\triangleq &{\lbrace(i_1,\ldots,i_k): i_j\in \lbrace l_1,\ldots,l_m\rbrace,1\leq j\leq k;i_r\neq i_s~\mbox{if}~r\neq s\rbrace}.\nonumber
\end{eqnarray}
Now, for any $(s_1,\ldots,s_{n})\in \mathcal{W}(k)$,
\begin{eqnarray}\label{fz}
&&\det\left(\alpha_1(s_1),\ldots,\alpha_n(s_{n})\right)\nonumber\\
&\leq & \sum_{(l_1,\ldots,l_{n})\in \mathcal{H}_{n}^{(1,\ldots,n)}}\prod_{k\in[1,n],s_k\neq -1}\frac{1}{\sigma^2}\cdot|f_{l_k}(y_{t_{s_{k}}})f_{k}(y_{t_{s_k}})|\nonumber\\
&\leq & \sum_{(l_1,\ldots,l_{n})\in \mathcal{H}_{n}^{(1,\ldots,n)}} \prod_{k\in[1,n],s_k\neq -1}\frac{1}{\sigma^2}\cdot(L_1+L_2|y_{s_{k}}|^{b_{l_k}})\cdot(L_1+L_2|y_{s_k}|^{b_k})\nonumber\\
&\leq & \left(L_1+L_2\right)^{2n}\cdot\sum_{(l_1,\ldots,l_{n})\in \mathcal{H}_{n}^{(1,\ldots,n)}} \prod_{k\in[1,n],s_k\neq -1}\nonumber\\
&&~~~~~~~~~~~~~~~~~~~~~~~~~~~~~~~~~~~~\frac{1}{\sigma^2}\cdot\left(\gamma\cdot\log^2 (t_{s_k}+3)\cdot\sigma^2\cdot\frac{|P_{t_{s_k}}^{-1}|}{|P_{t_{s_k}-1}^{-1}|}\right)^{\frac{b_{l_k}+b_k}{2}}\nonumber\\
&\leq & (L_1+L_2)^{2n}\cdot\sum_{(l_1,\ldots,l_{n})\in \mathcal{H}_{n}^{(1,\ldots,n)}}\prod_{k\in[1,n],s_k\neq -1}\nonumber\\
&&~~~~~~~~~~~~~~~~~~~~~~~~~~~~~~~~~~~~\frac{1}{\sigma^2}\cdot\left(\gamma\cdot\log^2 (t_{s_k}+3)\cdot\sigma^2\cdot\frac{|Q_{s_k}^{-1}|}{|Q_{s_k-1}^{-1}|}\right)^{\frac{b_{l_k}+b_k}{2}}\nonumber\\
&\leq &(L_1+L_2)^{2n}\cdot(1+\sigma^{2b_1-2}+\sigma^{2b_{n}-2})^{n}\cdot n!\nonumber\\
&&~~~~~~~~~~~~~~~~~~~~~~~~~~~\cdot(\gamma\cdot\log^2 (t_{s_k}+3))^{(b_1+\cdots+b_{n})}\cdot\prod_{i=1}^{n}\left(\frac{|Q_{k+1-i}^{-1}|}{|Q_{k-i}^{-1}|}\right)^{b_i},\nonumber\\
\end{eqnarray}
where $Q_{-1}\triangleq{I_{n}}$, and $L_1,L_2$ are two positive numbers satisfying
\begin{equation*}
|f_{j}(x)|\leq L_1+L_2|x|^{b_j},\quad \forall x\in\mathbb{R},~ j\in[1,n].
\end{equation*}
By combining  \dref{detr} and \dref{fz}, we conclude
\begin{eqnarray}
|Q_{k+1}^{-1}|&\leq &(k+2)^{n}\cdot(L_1+L_2)^{2n}\cdot(1+\sigma^{2b_1-2}+\sigma^{2b_{n}-2})^{n}\nonumber\\
&&\cdot n!\cdot(\gamma\cdot\log^2 (t_{k}+3))^{\sum_{i=1}^{n}b_i}\cdot\prod_{i=1}^{n}\left(\frac{|Q_{k+1-i}^{-1}|}{|Q_{k-i}^{-1}|}\right)^{b_i}.\nonumber
\end{eqnarray}
As a consequence, if $|Q_{k+1}^{-1}|>t_k^{\sqrt{\log t_k}}$ for all sufficiently large $k$, then there must exist a random number $t_{\epsilon}'$  for any given $\epsilon>0$ such that
\begin{eqnarray}\label{1jq}
&&(1-\epsilon)\log |Q_{k+1}^{-1}|\nonumber\\
&\leq &\sum_{i=1}^{n}b_i(\log |Q_{k+1-i}^{-1}|-\log |Q_{k-i}^{-1}| )\nonumber\\
&=& b_1 \log |Q_{k}^{-1}|-\sum_{i=1}^{n-1}(b_i-b_{i+1})\log |Q_{k-i}^{-1}|-b_{n}\log |Q_{k-n}^{-1}|
,\quad k\geq t_{\epsilon}'.\nonumber\\
\end{eqnarray}
Define $z_k\triangleq{\log |Q_{k+1}^{-1}|/ \log |Q_{k}^{-1}|}$, $k\geq 1$ and $z\triangleq{\liminf_{k\rightarrow+\infty}z_k}\geq 1$. Then inequality \dref{1jq} is equivalent to
\begin{equation*}
1-\epsilon+\sum_{i=1}^{n-1}(b_i-b_{i+1})\frac{1}{\prod_{j=0}^{i}z_{k-j}}+b_{n} \frac{1}{\prod_{j=0}^{n}z_{k-j}}\leq b_1\frac{1}{z_k},\quad k\geq t_{\epsilon}'.
\end{equation*}
Taking limit superior on both sides of the above inequality, we have
\begin{equation*}
1-\epsilon+\sum_{i=1}^{n-1}(b_i-b_{i+1})\frac{1}{z^{i+1}}+b_{n}\frac{1}{z^{n}}\leq b_1\frac{1}{z}.
\end{equation*}
Letting $\epsilon\rightarrow+\infty$ shows that $P(z)\leq 0$ and $z>1$. This contradicts to the definition of $P(x)$. Hence, we immediately deduce that
\begin{equation}\label{qkd}
|Q_{k+1}^{-1}|\leq t_k^{\sqrt{\log t_k}}\quad \mbox{i.o.}\quad\mbox{a.s.~on~} D'.
\end{equation}
Similarly to \dref{detr1}--\dref{fz}, when $k$ is sufficiently large and satisfies $|Q_{k+1}^{-1}|\leq t_k^{\sqrt{\log t_k}}$, for any $t\in (t_k+1,t_{k+1}+1]$, we have
\begin{eqnarray}
|P_{t}^{-1}|&\leq &(L_1+L_2)^{2n}\sum_{(s_1,\ldots,s_{n})\in \mathcal{W}(t-1)} \sum_{(l_1,\ldots,l_{n})\in \mathcal{H}_{n}^{(1,\ldots,n)}}\nonumber\\
&&\prod_{k\in[1,n],s_k\neq -1}\frac{1}{\sigma^2}\left(\gamma\cdot\log^2 (s_k+3)\cdot\sigma^2\cdot\frac{|P_{s_k}^{-1}|}{|P_{s_k-1}^{-1}|}\right)^{\frac{b_{l_k}+b_k}{2}}\nonumber\\
&\leq &(L_1+L_2)^{2n}\cdot(1+\sigma^{2b_1-2}+\sigma^{2b_{n}-2})^n\nonumber\\
&&\cdot(\gamma\cdot \log^2 (t+2))^{\sum_{i=1}^{n}b_i}(t+1)^{n} \cdot n!\cdot \left(\frac{|Q_{k+1}^{-1}|}{|Q_{k}^{-1}|}\right)^{\sum_{i=1}^{n}b_i}\nonumber\\
&\leq &(L_1+L_2)^{2n}\cdot(1+\sigma^{2b_1-2}+\sigma^{2b_{n}-2})^{n}\nonumber\\
&&\cdot(\gamma\cdot \log^2 (t+2))^{\sum_{i=1}^{m_1}b_i}(t+1)^{n} \cdot n!\cdot t_k^{\sqrt{\log t_k}\cdot\sum_{i=1}^{n}b_i}\nonumber\\
&<& (1+\sigma^{-2})^{ t}.\nonumber
\end{eqnarray}
This together with \dref{qkd} leads to
\begin{equation*}
|P_{t}^{-1}|<  (1+\sigma^{-2})^{ t}\quad \mbox{i.o.}\quad\mbox{a.s.~on~}D'.
\end{equation*}
However, according to the assumption, events $\lbrace |P_{t}^{-1}|< (1+\sigma^{-2})^{t}\rbrace_{t\geq 1}$ occur  finitely on $D$, which arises a contradiction. Hence $\lbrace t_k\rbrace$ is finite on $D$ almost surely, and \dref{yxd} follows.
\end{proof}

\begin{lemma}\label{two}
Under Assumptions A1--A3,  assume  \dref{bx} holds and  there is a set $D$ with $P(D)>0$ such that
\begin{equation}\label{P<io}
|P_{t}^{-1}|<(1+\sigma^{-2})^{t}\quad \mbox{i.o.}\quad\mbox{a.s.~on~}D.
\end{equation}
Then,
\begin{equation}\label{jj}
\sup_{t}\sigma_t<+\infty,\quad
\mbox{a.s.}\quad\mbox{on}~D.
\end{equation}
\end{lemma}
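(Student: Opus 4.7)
My plan is to build on the proof of Lemma \ref{one}, reusing the auxiliary sequence $\{t_k\}$ and companion matrices $\{Q_k\}$ constructed there, and show that $\{t_k\}$ must be finite almost surely on $D$. Once that is in place, the opening calculation of Lemma \ref{one}'s proof applies verbatim: if $\{t_k\}$ stops at some random index $K$, then $\phi_t^\tau Q_{K+1}\phi_t\leq\phi_{t_K}^\tau Q_K\phi_{t_K}$ for every $t>t_K$, and since $P_t\leq Q_{K+1}$ one has $\sigma_t^2\leq\sigma^2+\phi_{t_K}^\tau Q_K\phi_{t_K}<\infty$ uniformly in $t$, which is \dref{jj}.

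To rule out that $\{t_k\}$ is infinite on some $D''\subset D$ with $P(D'')>0$, I would further stratify by the strictly increasing ratios $r_k:=|Q_k^{-1}|/|Q_{k-1}^{-1}|$. When $\sup_k r_k<\infty$, on every inter-record interval we have $\phi_t^\tau P_t\phi_t\leq\phi_t^\tau Q_k\phi_t\leq\phi_{t_{k-1}}^\tau Q_{k-1}\phi_{t_{k-1}}=\sigma^2(r_k-1)$, and at records $\phi_{t_k}^\tau P_{t_k}\phi_{t_k}\leq\phi_{t_k}^\tau Q_k\phi_{t_k}=\sigma^2(r_{k+1}-1)$; both are uniformly bounded in $k$, so $\sup_t\sigma_t<\infty$ follows immediately with no contradiction needed. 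The delicate sub-case is therefore $r_k\to+\infty$ on a positive-probability $D'''\subset D''$. There I plan to mirror the bulk of Lemma \ref{one}'s proof: apply the Borel--Cantelli--L\'evy bound $|y_t|^2\leq\sigma_{t-1}^2\log^2 t$ a.s.\ eventually, combine it with \dref{bx} and the sandwich $\sigma_{t-1}^2\leq\sigma^2 r_k$ valid on each inter-record interval to control $\|\phi_{t_k}\|^{2b}$, and unfold the determinant expansion \dref{detr1}--\dref{fz} of $|Q_{k+1}^{-1}|$, which under \dref{bx} collapses to a single-exponent recursion
\begin{equation*}
(1-\epsilon)\log|Q_{k+1}^{-1}|\leq C_0\log t_k+b\bigl(\log|Q_k^{-1}|-\log|Q_{k-n}^{-1}|\bigr)
\end{equation*}
along the sub-sequence where $|Q_{k+1}^{-1}|>t_k^{\sqrt{\log t_k}}$. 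Coupling this recursion with the hypothesis via the chain $\prod_{k\leq K_j}r_k=|Q_{K_j}^{-1}|\leq|P_{T_j}^{-1}|<(1+\sigma^{-2})^{T_j}$, evaluated at each exceptional time $T_j$ (with $K_j$ counting records up to $T_j$), is where the contradiction with $r_k\to+\infty$ should emerge.

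The contradiction step is the genuine obstacle. In Lemma \ref{one} the closure was algebraic: the liminf $z=\liminf\log|Q_{k+1}^{-1}|/\log|Q_k^{-1}|$ was shown to lie in $(1,b_1]$ and to satisfy $P(z)\leq 0$, conflicting with \dref{px}. Here no polynomial rule is available, and the single-exponent recursion is too flat to constrain a single growth exponent---for $b\geq 1$ it is entirely consistent with $\log|Q_k^{-1}|$ growing like $b^k$ and $r_k$ diverging freely. The real content must therefore come from the quantitative inequality $\prod_{k\leq K_j}r_k\leq(1+\sigma^{-2})^{T_j}$: if $T_j/K_j$ stays bounded, strict monotonicity of $r_k$ forces $r_{K_j/2}$ to remain bounded and kills $r_k\to+\infty$; if $T_j/K_j\to+\infty$, the inter-record intervals must be so long that the Borel--Cantelli bound, together with the single-exponent recursion and the sandwich on $\sigma_{t-1}^2$, eventually produces excitation that would have to create a new record---contradicting the quiet interval. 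Pinning down this dichotomy cleanly, together with handling the borderline case $t_k=t_{k-1}+1$ where $\sigma_{t_{k-1}}^2$ is controlled by $r_k$ rather than $r_{k-1}$, is where essentially all the remaining technical work has to go.
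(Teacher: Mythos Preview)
Your approach diverges fundamentally from the paper's, and the gap you yourself flag is real rather than residual technical work. The paper does \emph{not} reuse the $\{t_k\},\{Q_k\}$ machinery of Lemma~\ref{one} for this lemma at all. Instead it pivots on Proposition~\ref{tzz1}, the eigenvalue inequality
\[
\liminf_{t\to\infty}\frac{\lambda_{\min}(t+1)}{t}\;\geq\; M\,\liminf_{t\to\infty}\frac{1}{t}\sum_{i=1}^{t}\frac{1}{\sigma_{i-1}}\,.
\]
Coupled with the AM--GM step $\frac{1}{t}\sum_{i=1}^{t} 1/\sigma_{i-1}\geq\bigl(\prod_{i=1}^{t} 1/\sigma_{i-1}\bigr)^{1/t}=\sigma\,|P_t^{-1}|^{-1/(2t)}$, the hypothesis $|P_t^{-1}|<(1+\sigma^{-2})^{t}$ converts directly into $\lambda_{\min}(t+1)\geq\varepsilon t$. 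From there a one-step induction---using only \dref{bx} and the Borel--Cantelli bound $|y_{t+1}|\leq\sigma_t\log t$---propagates $\sigma_t<1+\sigma$ forward indefinitely. The paper organizes this by splitting on whether the crossing-time set $F=\{t:|P_t^{-1}|\geq(1+\sigma^{-2})^{t},\ |P_{t+1}^{-1}|<(1+\sigma^{-2})^{t+1}\}$ is finite or infinite, but both branches run the same induction once a starting time with $\sigma_{k'}<1+\sigma$ is located.

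Your route, by contrast, cannot close without something playing the role of Proposition~\ref{tzz1}. The cases ``$\{t_k\}$ finite'' and ``$\sup_k r_k<\infty$'' are handled correctly, but in the sub-case $r_k\to\infty$ with $T_j/K_j\to\infty$ you need a mechanism that \emph{forces} a record inside a long quiet interval. The Borel--Cantelli bound gives only an \emph{upper} bound on $|y_t|$, hence on $\|\phi_t\|$; it gives no lower bound on $\phi_t^\tau Q_k\phi_t$, and nothing in your sketch prevents $\phi_t$ from sitting in low-eigenvalue directions of $Q_k$ throughout the interval. This is exactly where Assumption~A3 is supposed to enter: linear independence of $f_1,\dots,f_n$, together with the conditional Gaussian spread of $y_t$, is what guarantees that excitation accumulates in \emph{every} direction, and Proposition~\ref{tzz1} is the quantitative statement of that fact. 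Your plan invokes neither A3 nor Proposition~\ref{tzz1}, so the contradiction you seek in that sub-case has no available source. Even if you graft Proposition~\ref{tzz1} onto your dichotomy, the argument collapses to the paper's direct induction and the $\{Q_k\}$ scaffolding becomes superfluous.
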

\begin{proof}
Denote
\begin{eqnarray*}
F\triangleq{\lbrace t\geq 0: |P_{t}^{-1}|\geq (1+\sigma^{-2})^{t},|P_{t+1}^{-1}|< (1+\sigma^{-2})^{t+1}\rbrace}.\nonumber
\end{eqnarray*}
Firstly, by \dref{P<io}, for all sufficiently large $t$
$$
|P_{t}^{-1}|<(1+\sigma^{-2})^{t},\quad \mbox{a.s.}\quad\mbox{on}~\lbrace |F|<+\infty \rbrace\cap D.
$$
Let $\varepsilon=\frac{M}{3}\cdot\min\lbrace (1+\sigma)^{-1},\sigma^3\cdot(1+\sigma^2)^{-1}\rbrace$.
In view of Lemma \ref{tzz1}, there is a random integer $t_1$ such that for all $t>t_1$,
\begin{eqnarray*}
\lambda_{\min}(t+1)&\geq & M\sum_{i=1}^{t}\frac{1}{\sigma_{i-1}}-\varepsilon t\geq M\cdot t\cdot\left(\frac{1}{\prod_{i=1}^{t}\sigma_{i-1}}\right)^{\frac{1}{t}}-\varepsilon t\nonumber\\
&=&M\sigma\cdot t\cdot\left(\frac{1}{|P_{t}^{-1}|}\right)^{\frac{1}{2t}}-\varepsilon t>M\sigma\cdot t\cdot\left(\frac{1}{(1+\sigma^{-2})^{t}}\right)^{\frac{1}{2t}}-\varepsilon t\nonumber\\
&>&\varepsilon t,\quad \mbox{a.s.}\quad\mbox{on}~\lbrace |F|<+\infty \rbrace\cap D.
\end{eqnarray*}
In addition, for some integer $N>0$,
\begin{equation}\label{tn}
\varepsilon t>n(K_1+K_2)+nK_{2}\log^{2b} t \cdot (1+\sigma)^{2b},\quad \forall t\geq N,
\end{equation}
where constants $K_1,K_2$ satisfy $\|\phi(x)\|\leq K_1+K_2|x|^b$ for $x\in\mathbb{R}$. Clearly, there exists a random integer $t_2>t_1+N$ such that $\sigma_{t_2}<1+\sigma$. Next, we show  $\sigma_{t}<1+\sigma$ for all $t\geq t_2$ by  induction on set $\lbrace |F|<+\infty \rbrace\cap D$.

Suppose that $\sigma_k<1+\sigma$ for some $k\geq t_2$, then  \dref{tn} gives
\begin{eqnarray}
\sigma_{k+1}^2&=&\sigma^2\cdot\frac{|P_{k+2}^{-1}|}{|P_{k+1}^{-1}|}=\sigma^2+\phi_{k+1}^{\tau}P_{k+1}\phi_{k+1}\leq \sigma^2+\frac{|\phi(y_{k+1})|^2}{\lambda_{\min}(k+1)}\nonumber\\
&\leq & \sigma^2+\frac{n(K_1+K_2)+nK_{2}|y_{k+1}|^{2b_1}}{\varepsilon k}\nonumber\\
&\leq &\sigma^2+\frac{n(K_1+K_2)+nK_{2}\log^{2b} k \cdot\sigma_{k}^{2b}}{\varepsilon k}\nonumber\\
&<&(1+\sigma)^2,\quad \mbox{a.s.}\quad\mbox{on}~\lbrace |F|<+\infty \rbrace\cap D.\nonumber
\end{eqnarray}
By induction, we know $\sigma_{t}<1+\sigma$ for all $t\geq t_2$. This means
\begin{eqnarray*}
\sup_{t}\sigma_t<+\infty\quad \mbox{a.s.}\quad\mbox{on}~\lbrace |F|<+\infty \rbrace\cap D.
\end{eqnarray*}

So the remainder of the argument is focused on  set   $\lbrace |F|=+\infty \rbrace\cap D$.  By Lemma \ref{tzz1} again, there exists a random integer $t_1'$ such that
\begin{eqnarray}\label{ltt}
\lambda_{\min}(t+1)&\geq & M\sum_{i=1}^{t}\frac{1}{\sigma_{i-1}}-\varepsilon t,\quad t>t_1',\quad \mbox{a.s.}.
\end{eqnarray}
On $\lbrace |F|=+\infty \rbrace\cap D$, select an random integer $k'\in F $ such that $k'>t_1'+N+2$. we now  prove that for all $t\geq k'$,
\begin{eqnarray}\label{tj1}
\lambda_{\min}(t+1)>\varepsilon t\quad\mbox{and}\quad \sigma_{t}<1+\sigma\quad \mbox{a.s.}\quad\mbox{on}~\lbrace |F|=+\infty \rbrace\cap D.
\end{eqnarray}
As a matter of fact, for $k'\in F $, we have
\begin{equation*}
\sigma_{k'}^2=\sigma^2\cdot\frac{|P_{k'+1}^{-1}|}{|P_{k'}^{-1}|}<\sigma^2\cdot(1+\sigma^{-2})^{k'+1- k'}<(1+\sigma)^2.
\end{equation*}
Consequently,  \dref{ltt} yields
\begin{eqnarray}
\lambda_{\min}(k'+1)&\geq & M\sum_{i=1}^{k'}\frac{1}{\sigma_{i-1}}-\varepsilon k'\geq M\cdot\sigma\cdot k'\cdot\left(\frac{1}{|P_{k'}^{-1}|}\right)^{\frac{1}{2k'}}-\varepsilon k'\nonumber\\
&\geq &  M\cdot\sigma\cdot k'\cdot\left(\frac{1}{|P_{k'+1}^{-1}|}\right)^{\frac{1}{2k'}}-\varepsilon k'\nonumber\\
&> & M\cdot\sigma\cdot k'\cdot\left(\frac{1}{(1+\sigma^{-2})^{k'+1}}\right)^{\frac{1}{2k'}}-\varepsilon k'\geq \varepsilon k'.\nonumber
\end{eqnarray}

Assume that for some $j\geq k'$, \dref{tj1} holds for all $t\in [k', j]$. Then it follows that
\begin{eqnarray}
\sigma_{j+1}^{2}&=&\sigma^2\cdot\frac{|P_{j+2}^{-1}|}{|P_{j+1}^{-1}|}\leq \sigma^2+\frac{|\phi(y_{j+1})|^2}{\lambda_{\min}(j+1)}\leq  \sigma^2+\frac{m_1(K_1+K_2)+m_1K_{2}|y_{j+1}|^{2b}}{\varepsilon j}\nonumber\\
&\leq &\sigma^2+\frac{n(K_1+K_2)+nK_{2}\log^{2b} j \cdot\sigma_{j}^{2b}}{\varepsilon j}
<(1+\sigma)^2\quad \mbox{a.s.}\quad\mbox{on}~\lbrace |F|=+\infty \rbrace\cap D.\nonumber
\end{eqnarray}
As a result,
\begin{eqnarray}
\lambda_{\min}(j+2)&\geq & M\sum_{i=1}^{j+1}\frac{1}{\sigma_{i-1}}-\varepsilon(j+1)\nonumber\\
&=&M\sum_{i=1}^{k'}\frac{1}{\sigma_{i-1}}+M\sum_{k'<i\leq j+1}\frac{1}{\sigma_{i-1}}-\varepsilon(j+1)\nonumber\\
&> & M\cdot\sigma\cdot k'\cdot\left(\frac{1}{(1+\sigma^{-2})^{k'+1}}\right)^{\frac{1}{2k'}}+\frac{M}{1+\sigma}(j-k+1)-\varepsilon(j+1)\nonumber\\
&\geq &\varepsilon(j+1)\quad \mbox{a.s.}\quad\mbox{on}~\lbrace |F|=+\infty \rbrace\cap D.\nonumber
\end{eqnarray}
Therefore, \dref{tj1} is true for $t=j+1$, and the induction is completed. So
\begin{eqnarray*}
\sup_{t}\sigma_t<+\infty\quad \mbox{a.s.}\quad\mbox{on}~\lbrace |F|=+\infty \rbrace\cap D.
\end{eqnarray*}

To sum up,  \dref{jj} holds as desired.
\end{proof}


\begin{lemma}\label{zh}
Under Assumptions A1--A2, if \dref{fk1k2} holds and $\sup_{t}\sigma_t<+\infty$ a.s., then
\begin{eqnarray*}
\sup_{t\geq 1}\frac{1}{t}\sum_{i=1}^{t}y_i^2<+\infty,\quad \mbox{a.s.}.
\end{eqnarray*}
\end{lemma}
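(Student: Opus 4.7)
The strategy is to exploit the conditional Gaussian representation $y_{t+1}\mid\mathcal{F}_t^y\sim N(0,\sigma_t^2)$ already established in \dref{mean}--\dref{var}, and to reduce the Cesàro average $\frac{1}{t}\sum_{i=1}^{t}y_i^2$ to a weighted sum of squares of i.i.d.\ standard normal variables whose weights are uniformly bounded by hypothesis. The strong law of large numbers then closes the argument.

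Concretely, I would set $\xi_{t+1}\triangleq y_{t+1}/\sigma_t$ for $t\geq 0$, which is well-defined since $\sigma_t^2=\sigma^2+\phi_t^\tau P_t\phi_t\geq \sigma^2>0$ is $\mathcal{F}_t^y$-measurable. From \dref{mean}--\dref{var}, $\xi_{t+1}\mid\mathcal{F}_t^y\sim N(0,1)$; because this conditional law is deterministic, $\xi_{t+1}$ is in fact independent of $\mathcal{F}_t^y$. Since $\xi_1,\ldots,\xi_t$ are themselves $\mathcal{F}_t^y$-measurable, a routine induction shows that $\{\xi_t\}_{t\geq 1}$ is an i.i.d.\ sequence of standard normals.

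By Kolmogorov's strong law of large numbers,
\begin{equation*}
\frac{1}{t}\sum_{i=1}^{t}\xi_i^2 \xrightarrow[t\rightarrow +\infty]{} E[\xi_1^2]=1,\quad\mbox{a.s.},
\end{equation*}
so $C\triangleq\sup_{t\geq 1}\frac{1}{t}\sum_{i=1}^{t}\xi_i^2<+\infty$ almost surely. Combining $y_i^2=\sigma_{i-1}^2\xi_i^2$ with the hypothesis $K\triangleq \sup_{t\geq 0}\sigma_t<+\infty$ a.s., one obtains
\begin{equation*}
\sup_{t\geq 1}\frac{1}{t}\sum_{i=1}^{t}y_i^2\leq K^2\cdot\sup_{t\geq 1}\frac{1}{t}\sum_{i=1}^{t}\xi_i^2\leq C K^2<+\infty,\quad\mbox{a.s.},
\end{equation*}
which is the desired conclusion.

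There is no substantial obstacle here: the only point that warrants care is verifying that the sequence $\{\xi_t\}$ is genuinely i.i.d.\ rather than merely a sequence of conditionally standard normal variables, and this follows from the fact that its conditional law does not depend on the conditioning $\sigma$-algebra. The exponential growth hypothesis \dref{fk1k2} is not invoked in this lemma; it enters only upstream, where it is used together with the polynomial-rule style machinery of Lemma \ref{two} to produce the standing assumption $\sup_t\sigma_t<+\infty$.
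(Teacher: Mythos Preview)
Your proof is correct, but it takes a genuinely different route from the paper's. The paper does not normalize $y_{t+1}$ by $\sigma_t$; instead it invokes the logarithm law of \cite[Lemma 3.1]{guo95}, which controls $\sum_{i=0}^{t}\alpha_i$ with $\alpha_i=(1+\phi_i^\tau P_i\phi_i)^{-1}(\tilde\theta_i^\tau\phi_i)^2$ by $O(\log(1+\sum_i\|\phi_i\|^2))$. Using $\sup_t\sigma_t<\infty$ to bound $|P_{i+1}^{-1}|/|P_i^{-1}|$ and the exponential growth hypothesis \dref{fk1k2} to bound $\|\phi_i\|$, the paper obtains
\[
\tfrac{1}{2}\sum_{i=1}^{t}y_i^2-\sum_{i=0}^{t}w_{i+1}^2\leq O(\log t)+O\Big(\big(\textstyle\sum_{i=1}^{t}y_i^2\big)^{1/2}\Big),
\]
and closes via $\sum w_i^2=O(t)$.

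Your argument is more elementary: it exploits the full conditional Gaussian structure \dref{mean}--\dref{var} (available here because of Assumptions A1--A2 and the specific control \dref{ut}) to manufacture an i.i.d.\ standard normal sequence, after which the SLLN does all the work. A notable by-product is that your proof never touches \dref{fk1k2}, so it shows that this hypothesis is in fact redundant for Lemma~\ref{zh} in the present Bayesian--Gaussian setting. The paper's approach, on the other hand, is more robust: the logarithm law from \cite{guo95} does not require exact conditional Gaussianity, so the same template would survive under weaker noise or prior assumptions where your normalization trick would break down.
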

\begin{proof}
Recall from \cite[Lemma 3.1]{guo95}  that
\begin{equation}\label{alpha}
\sum_{i=0}^{t}\alpha_i=O\left(\log\left(1+\sum_{i=0}^{t}\phi_{i}^{\tau}\phi_{i}\right)\right),\quad \mbox{a.s.},\nonumber
\end{equation}
where $\alpha_{i}\triangleq{(1+\phi_i^{\tau}P_i\phi_i)^{-1}(\tilde{\theta}\phi_i)^2}$, $i\geq 0 $.  Therefore,
\begin{eqnarray*}
\frac{1}{2}\sum_{i=1}^{t}y_i^2-\sum_{i=0}^{t}w_{i+1}^2&\leq &\sum_{i=0}^{t}(y_{i+1}-w_{i+1})^2=\sum_{i=0}^{t}\alpha_i\frac{|P_{i+1}^{-1}|}{|P_{i}^{-1}|}=O\left(\sum_{i=0}^{t}\alpha_i\right)\nonumber\\
&=&O\left(\log\left(1+\sum_{i=0}^{t}\phi_{i}^{\tau}\phi_{i}\right)\right)\nonumber\\
&\leq &O\left(\log\left(1+k_1^2\sum_{i=0}^{t}e^{2k_2|y_i|}\right)\right)\nonumber\\
&\leq & O\left(\log\left(1+k_1^2e^{2k_2|y_0|}+k_1^2t\cdot e^{2k_2\cdot\max_{1\leq i\leq t}|y_i|}\right)\right)\nonumber\\
&=&O(1)+O(\log t)+O\left(\left(\sum_{i=1}^{t}y_i^2\right)^{\frac{1}{2}}\right),\quad \mbox{a.s.}.
\end{eqnarray*}
Observe that $\sum_{i=0}^{t}w_{i+1}^2=O(t)$ as $t\rightarrow+\infty$, then
\begin{eqnarray*}
\frac{1}{2}\sum_{i=1}^{t}y_i^2\leq O(t)+O\left(\left(\sum_{i=1}^{t}y_i^2\right)^{\frac{1}{2}}\right),
\end{eqnarray*}
which implies $\sum_{i=1}^{t}y_i^2=O(t)$ almost surely.
\end{proof}
With all the technique  lemmas ready, Theorem \ref{1sta} is straightforward.

\begin{proof}[Proof of Theorem \ref{1sta}]
Taking account of Lemmas \ref{one} and \ref{two}, we have
\begin{equation}
\sup\limits_{t} \sigma_{t}<+\infty,\quad \mbox{a.s.},\nonumber
\end{equation}
which leads to Theorem \ref{1sta}  directly by Lemma \ref{zh}.
\end{proof}

\section{Proof of Theorem \ref{2sta}}\label{AppA1}
The proof is based on  two lemmas below.
\begin{lemma}\label{one1}
If $\liminf_{t\to+\infty}\frac{\lambda_{\min}(t+1)}{t}>0$ and $\sup\limits_{t}\sigma_t=+\infty$ hold almost surely on a set $D$ with $P(D)>0$, then
\begin{eqnarray*}
\lim_{t\rightarrow +\infty}\sigma_t=+\infty,\quad \mbox{a.s.}\quad \mbox{on}~D.
\end{eqnarray*}
\end{lemma}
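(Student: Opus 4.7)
The plan is to argue by contradiction. Assume there is a measurable subset $D_0\subseteq D$ with $P(D_0)>0$ on which $\sup_t\sigma_t=+\infty$ holds simultaneously with $\liminf_t\sigma_t<+\infty$. On $D_0$ I may then choose a (random) finite constant $M$ and extract interleaved subsequences $\{r_k\},\{s_k\}$ satisfying $r_k<s_k<r_{k+1}$, $\sigma_{r_k}\leq M$ for every $k$, and $\sigma_{s_k}\to+\infty$. The goal is to show that this configuration is incompatible with the hypothesis $\liminf_t\lambda_{\min}(t+1)/t>0$, which (together with the standing assumptions of Theorem \ref{2sta} that govern this section) will supply the contradiction.

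The basic ingredients I will combine are: the identity $\sigma_t^2=\sigma^2|P_{t+1}^{-1}|/|P_t^{-1}|$ and the rank-one update $P_{t+1}^{-1}=P_t^{-1}+\sigma^{-2}\phi_t\phi_t^{\tau}$; the conditional Gaussianity $y_{t+1}\mid\mathcal{F}_t^y\sim N(0,\sigma_t^2)$ and the resulting Borel--Cantelli tail bound $|y_t|\leq\sigma_{t-1}\log t$ eventually a.s., exactly as in Lemma \ref{one}; the spectral inequality $P_t\preceq I/\lambda_{\min}(t)$, which gives $\sigma_t^2\leq\sigma^2+\|\phi(y_t)\|^2/\lambda_{\min}(t)$; and the exponential growth bound $\|\phi(x)\|\leq k_1 e^{k_2|x|}$ from (\ref{fk1k2}). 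Letting $c>0$ be a lower bound for $\lambda_{\min}(t+1)/(t+1)$ (eventually a.s.), chaining these ingredients yields, on $D_0$ and for all sufficiently large $t$,
\begin{equation*}
\sigma_{t+1}^2\;\leq\;\sigma^2\;+\;\frac{k_1^2}{c(t+1)}\,(t+1)^{2k_2\sigma_t},
\end{equation*}
a one-step bound which I would then try to propagate forward from $\sigma_{r_k}\leq M$ in order to contradict $\sigma_{s_k}\to+\infty$.

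The main obstacle is that the recursion above is, by itself, too loose: a single iteration already allows $\sigma_{r_k+1}$ to be polynomially large in $r_k$, and two iterations can lift $\sigma_{r_k+2}$ to exponential size, so a naive Gronwall-type iteration will not close the argument. A successful proof must exploit the directional content of the rank-one update: a sudden spike in $\sigma_t$ forces $\phi_t$ into a direction in which $P_t$ has a large eigenvalue, and the update then strictly shrinks $P_{t+1}$ in that very direction. Since there are only $n$ orthogonal directions available and $\lambda_{\min}(t+1)/t\geq c>0$, I expect repeated spikes to exhaust the ``information budget'' carried by the lower eigenvalue bound, so that they cannot coexist with $\sigma_t$ returning infinitely often to the bounded level $M$. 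The technical heart of the proof should be this directional bookkeeping, quantified via Cauchy interlacing applied to $P_{t+1}^{-1}$ and the determinantal identity $|P_{t+1}^{-1}|=|P_t^{-1}|(1+\sigma^{-2}\phi_t^{\tau}P_t\phi_t)$, in the same spirit as the eigenvalue-ratio analysis used in Lemma \ref{one}.
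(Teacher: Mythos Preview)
Your contradiction setup and list of ingredients are right, but the argument does not close, and the proposed rescue will not save it. You already see that the one-step a.s.\ bound $\sigma_{t+1}^2\le\sigma^2+k_1^2(t+1)^{2k_2\sigma_t-1}/c$ is too weak to iterate. The ``directional bookkeeping'' idea is based on a misdiagnosis: since $\lambda_{\min}(t+1)\ge ct$ bounds \emph{every} eigenvalue of $P_{t+1}^{-1}$ from below, every eigenvalue of $P_{t+1}$ is at most $1/(ct)$, so a large value of $\phi_t^\tau P_t\phi_t$ is not produced by $\phi_t$ pointing into a poorly identified direction---it is produced purely by $\|\phi_t\|$ being large. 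There is no finite directional budget for the spikes to exhaust; $\|\phi_t\|$ can be enormous repeatedly, in any direction, without contradicting the eigenvalue hypothesis or the interlacing inequalities.

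The actual gap is that you threw away the conditional Gaussian law by passing to the a.s.\ envelope $|y_{t+1}|\le\sigma_t\log(t+1)$, which is essentially sharp and therefore cannot separate steps that start from bounded $\sigma_t$ from those that do not. The paper keeps the full conditional distribution and, for each fixed level $z>\sigma^2$, bounds the conditional probability of the single upcrossing event $\Omega_{k+1}=\{\sigma_k^2\le z,\ \sigma_{k+1}^2\ge z\}$. On that event $\sigma_{k+1}^2\le\sigma^2+\|\phi(y_{k+1})\|^2/\lambda_{\min}(k+1)$ forces $\|\phi(y_{k+1})\|^2\ge(z-\sigma^2)\lambda_{\min}(k+1)$, hence by \dref{fk1k2} forces $|y_{k+1}|$ to exceed a threshold of order $\log k$ (using $\lambda_{\min}(k+1)\ge ck$); conditioned on $\mathcal{F}_k^y$ and on $\sigma_k^2\le z$, $y_{k+1}\sim N(0,\sigma_k^2)$ with $\sigma_k\le\sqrt z$, so this is a Gaussian tail at level of order $(\log k)/\sqrt z$, which is summable in $k$. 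Borel--Cantelli--L\'evy then gives only finitely many upcrossings of level $z$ a.s., and that is incompatible with $\sup_t\sigma_t=+\infty$ together with $\liminf_t\sigma_t<+\infty$ on a set of positive probability. No iteration and no spectral analysis beyond the single inequality $\phi_t^\tau P_t\phi_t\le\|\phi_t\|^2/\lambda_{\min}(t)$ is required.
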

\begin{proof} Given a number $z>\sigma^2$,  define
\begin{equation}
\Omega_{k+1}\triangleq{\left\lbrace\sigma_{k}^2\leqslant z,\sigma_{k+1}^2\geqslant z\right\rbrace},\quad k\geq 0.\nonumber
\end{equation}
Therefore,
\begin{eqnarray}\label{qm2}
&&P(\Omega_{k+1}|\mathcal{F}_{k}^y)\nonumber\\
&=&P\left(\sigma^2\cdot\frac{|P_{k+2}^{-1}|}{|P_{k+1}^{-1}|}\geqslant z,\sigma_{k}^2\leqslant z\Big|\mathcal{F}_{k}^y\right)\nonumber\\
&\leq &P\left(\sigma^2+\frac{\|\phi(y_{k+1})\|^2}{\lambda_{\min}(k+1)}\geqslant z,\sigma_{k}^2\leqslant z\bigg|\mathcal{F}_{k}^y\right)\nonumber\\
&\leq &P\left(|y_{k+1}|\geqslant \frac{1}{k_2}\log\left(\frac{(z-\sigma^2)\lambda_{\min}(k+1)}{k_1}\right),\sigma_{k}^2\leqslant z\bigg|\mathcal{F}_{k}^y\right)\nonumber\\
&=&E\left\lbrace I_{\left\lbrace |y_{k+1}|\geqslant  \frac{1}{k_2}\log\left(\frac{(z-\sigma^2)\lambda_{\min}(k+1)}{k_1}\right)\right\rbrace}\cdot I_{\left\lbrace \sigma_{k}^2\leqslant z\right\rbrace}\bigg|\mathcal{F}_{k}^y\right\rbrace\nonumber\\
&=&I_{\left\lbrace \sigma_{k}^2\leqslant z\right\rbrace}\cdot E\left\lbrace I_{\left\lbrace |y_{k+1}|\geqslant  \frac{1}{k_2}\log\left(\frac{(z-\sigma^2)\lambda_{\min}(k+1)}{k_1}\right)\right\rbrace}\bigg|\mathcal{F}_{k}^y\right\rbrace\nonumber\\
&=&I_{\left\lbrace \sigma_{k}^2\leqslant z\right\rbrace}\cdot \frac{1}{\sqrt{2\pi}}\int_{|x\cdot\sigma_{k}|\geqslant  \frac{1}{k_2}\log\left(\frac{(z-\sigma^2)\lambda_{\min}(k+1)}{k_1}\right)}e^{-\frac{x^2}{2}}\,dx\nonumber\\
&\leq &\frac{1}{\sqrt{2\pi}}\int_{|x\cdot\sqrt{z}|\geqslant  \frac{1}{k_2}\log\left(\frac{(z-\sigma^2)\lambda_{\min}(k+1)}{k_1}\right)}e^{-\frac{x^2}{2}}\,dx\nonumber\\
&=&\frac{1}{\sqrt{2\pi}}\int_{|x|\geq X_{k}}e^{-\frac{x^2}{2}}\,dx,
\end{eqnarray}
where $X_{k}\triangleq{\frac{1}{\sqrt{z}} \frac{1}{k_2}\log\left(\frac{(z-\sigma^2)\lambda_{\min}(k+1)}{k_1}\right)}$. Since $\liminf_{k\to+\infty}\frac{\lambda_{\min}(k+1)}{k}>0$ implies $\liminf_{k\to+\infty}\frac{X_{k}}{\log k}>0$, \dref{qm2} yields
\begin{equation}
\sum_{k=1}^{+\infty}P(\Omega_{k+1}|\mathcal{F}_{k}^y)\leq \sum_{k=1}^{+\infty}\frac{1}{\sqrt{2\pi}}\int_{|x|\geq X_k}e^{-\frac{x^2}{2}}\,dx<+\infty.\nonumber
\end{equation}
Taking account of \textit{Borel-Cantelli-Levy} theorem, $\lbrace \Omega_{k}\rbrace$ occur only finite times  almost surely.
The rest of the proof is  as the same of that for \cite[Lemma 3.5]{liu2018}.
\end{proof}
\begin{lemma}\label{bl}
Let $\lbrace A_k\rbrace_{k\geq1}$ be a sequence of events that $A_k\triangleq{\lbrace y_{k}\in S_{L}\rbrace}$.
Then, there exists a constant $c>0$, which only depends on $f_{1},\ldots,f_{n}$, such that for all sufficiently large $t$,
\begin{equation}\label{zbl}
\sum_{k=1}^{t}I_{A_k}\geq ct,\quad \mbox{a.s.}.
\end{equation}
\end{lemma}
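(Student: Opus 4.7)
The plan is to obtain a uniform positive lower bound on the conditional probability $P(A_k\mid\mathcal{F}_{k-1}^y)$ and then apply a martingale strong law. By the Kalman--filter identities \dref{mean}--\dref{var}, $y_k$ given $\mathcal{F}_{k-1}^y$ is Gaussian $N(0,\sigma_{k-1}^2)$ with $\sigma_{k-1}\geq\sigma>0$, so
\begin{equation*}
P(A_k\mid\mathcal{F}_{k-1}^y)=\frac{1}{\sqrt{2\pi}\,\sigma_{k-1}}\int_{S_L}e^{-x^{2}/(2\sigma_{k-1}^2)}\,dx.
\end{equation*}
Condition (ii) of Theorem \ref{2sta} supplies $A_0>0$ such that $\ell(S_L\cap[-l,l])\geq p_L\,l/2$ for every $l\geq A_0$.

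I would split the lower bound into two regimes of $\sigma_{k-1}$. When $\sigma_{k-1}\geq A_0$, take $l=\sigma_{k-1}$; the Gaussian density is at least $e^{-1/2}/(\sqrt{2\pi}\,\sigma_{k-1})$ on $[-\sigma_{k-1},\sigma_{k-1}]$, and integrating over $S_L\cap[-\sigma_{k-1},\sigma_{k-1}]$ gives
\begin{equation*}
P(A_k\mid\mathcal{F}_{k-1}^y)\geq\frac{e^{-1/2}}{\sqrt{2\pi}\,\sigma_{k-1}}\cdot\frac{p_L\sigma_{k-1}}{2}=\frac{p_L\,e^{-1/2}}{2\sqrt{2\pi}}.
\end{equation*}
When $\sigma\leq\sigma_{k-1}<A_0$, exploiting $\sigma_{k-1}\leq A_0$ and $\sigma_{k-1}\geq\sigma$, the density on $[-A_0,A_0]$ is at least $e^{-A_0^2/(2\sigma^2)}/(\sqrt{2\pi}\,A_0)$, so
\begin{equation*}
P(A_k\mid\mathcal{F}_{k-1}^y)\geq\frac{e^{-A_0^2/(2\sigma^2)}}{\sqrt{2\pi}\,A_0}\cdot\frac{p_L A_0}{2}=\frac{p_L\,e^{-A_0^2/(2\sigma^2)}}{2\sqrt{2\pi}}.
\end{equation*}
Taking the smaller of the two constants yields $2c>0$, determined only by $p_L$, $A_0$ and the noise variance (hence by the functions $f_1,\ldots,f_n$), with $P(A_k\mid\mathcal{F}_{k-1}^y)\geq 2c$ for every $k\geq 1$.

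To finish, put $M_t\triangleq\sum_{k=1}^{t}\bigl[I_{A_k}-P(A_k\mid\mathcal{F}_{k-1}^y)\bigr]$, which is a martingale with increments bounded by $1$. The standard martingale strong law gives $M_t=o(t)$ a.s., and hence
\begin{equation*}
\sum_{k=1}^{t}I_{A_k}=M_t+\sum_{k=1}^{t}P(A_k\mid\mathcal{F}_{k-1}^y)\geq -|M_t|+2ct\geq ct
\end{equation*}
for all sufficiently large $t$, almost surely, which is exactly \dref{zbl}.

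The main obstacle is producing a lower bound on $P(A_k\mid\mathcal{F}_{k-1}^y)$ that degrades neither as $\sigma_{k-1}\to\infty$ (the Gaussian spreads out, depositing little mass on any fixed bounded set) nor when $\sigma_{k-1}$ hovers near the floor $\sigma$ (the Gaussian concentrates near $0$ while $S_L$ may avoid a neighborhood of the origin). The two--regime argument above, anchored on the asymptotic density $p_L>0$, is designed precisely to reconcile these competing behaviors; once the uniform bound is secured, the martingale strong law closes the proof without further effort.
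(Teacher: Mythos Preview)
Your proof is correct and follows essentially the same route as the paper: bound $P(A_k\mid\mathcal{F}_{k-1}^y)$ below by a positive constant using the conditional Gaussian law and the density condition \dref{sl}, then invoke the martingale strong law of large numbers to convert this into \dref{zbl}. The only cosmetic difference is that the paper handles both regimes at once by integrating over the scaled window $[-q_1\sigma_{k-1}/\sigma,\,q_1\sigma_{k-1}/\sigma]$ (which always has half-length at least $q_1$ since $\sigma_{k-1}\geq\sigma$), thereby avoiding your case split; the resulting constant and the rest of the argument are the same.
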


\begin{proof}
Recall that \dref{sl} means there are two numbers $ q_1,q_2>0$ such that 
\begin{equation}\label{slq}
\frac{\ell(S_{L}\cap [-l,l])}{l}> q_2\quad\mbox{for}\quad l\geq q_1.
\end{equation}
 Since $y_{i+1}$ is conditional Gaussian with the conditional mean $m_i=0$ and variance $\sigma_i^2$ from \dref{mean} and \dref{var}, we compute
\begin{eqnarray}
P\left(A_{i+1}|\mathcal{F}_i^y\right)&=&\frac{1}{\sqrt{2\pi}}\int_{|x\sigma_i|\in S_{L}}e^{-\frac{x^2}{2}}\,dx\nonumber\\
&\geq &\frac{1}{\sqrt{2\pi}}\int_{|x\sigma_i|\in S_{L},|x|\leqslant \frac{q_1}{\sigma}}e^{-\frac{x^2}{2}}\,dx\nonumber\\
&\geq &\ell\left(x:|x\sigma_i|\in S_{L},|x|\leqslant \frac{q_1}{\sigma}\right)\cdot \frac{1}{\sqrt{2\pi}}e^{-\frac{q_1^2}{2\sigma^2}}\nonumber\\
&=&\dfrac{\ell(S_{L}\cap [-q_1\sigma_i\sigma^{-1},q_1\sigma_i\sigma^{-1}])}{\sigma_i}\cdot \frac{1}{\sqrt{2\pi}}e^{-\frac{q_1^2}{2\sigma^2}}.\nonumber
\end{eqnarray}
Owing to \dref{slq} and $\sigma_i\geq \sigma$, we immediately deduce $P(A_{i+1}|\mathcal{F}_i^y)>\frac{q_1q_2}{\sqrt{2\pi}\sigma}e^{-\frac{q_1^2}{2\sigma^2}}$. Further, by applying the strong
law of large numbers for the martingale differences, we have
\begin{equation*}
\frac{\sum_{k=1}^{t}(I_{A_k}-P(A_{k}|\mathcal{F}_{k-1}^y))}{t}=o(1),\quad \mbox{a.s.}.
\end{equation*}
Then for all sufficiently large $t$,
\begin{equation*}
\sum_{k=1}^{t}I_{A_k}\geq \sum_{k=1}^{t}P(A_{k}|\mathcal{F}_{k-1}^y)-\frac{q_1q_2}{2\sqrt{2\pi}\sigma}e^{-\frac{q_1^2}{2\sigma^2}}t\geq \frac{q_1q_2}{2\sqrt{2\pi}\sigma}e^{-\frac{q_1^2}{2\sigma^2}}t.
\end{equation*}
So Lemma \ref{bl}  follows by letting $c=\frac{q_1q_2}{2\sqrt{2\pi}\sigma}e^{-\frac{q_1^2}{2\sigma^2}}$.
\end{proof}

\begin{proof}[Proof of Theorem \ref{2sta}]
Same as previous, we are going to show
 \begin{equation}\label{xyzw}
\sup_{t}\sigma_t<+\infty,\quad \mbox{a.s.}.
\end{equation}
Assume  there is a set $D$ with $P(D)>0$ such that $\sup_{t}\sigma_t=+\infty$ on $D$. Note that $y_k\in S_L$ infers
\begin{equation*}
\sigma_{k}^{2}=\frac{|P_{k+1}^{-1}|}{|P_{k}^{-1}|}\leq \sigma^2+\frac{\|\phi(y_k)\|^2}{\lambda_{\min}(k)}\leq \sigma^2+L^2,
\end{equation*}
then for any $t\geq 1$,
\begin{equation}\label{ia}
\sum_{k=1}^{t}I_{\lbrace \sigma_{k}\leq\sqrt{\sigma^2+L^2}\rbrace}\geq\sum_{k=1}^{t}I_{A_k},
\end{equation}
where $I_{A_k}, k\in[1,t]$ are defined in Lemma \ref{bl}.
Take $\varepsilon=\frac{1}{4}(\sigma^2+L^2)^{-\frac{1}{2}}Mc$.
By Proposition \ref{tzz1},  Lemma \ref{bl} and \dref{ia}, for all sufficiently large $t$, we have
\begin{eqnarray}
\lambda_{\min}(t+2)&\geq & M\sum_{i=1}^{t+1}\frac{1}{\sigma_{i-1}}-\varepsilon (t+1)\nonumber\\
&\geq & M\cdot\frac{ct}{\sqrt{\sigma^2+L^2}}-\frac{1}{4}(\sigma^2+L^2)^{-\frac{1}{2}}Mc(t+1)\nonumber\\
&>& \frac{Mct}{2\sqrt{\sigma^2+L^2}},  \quad \mbox{a.s.}.\nonumber
\end{eqnarray}
This means  $\liminf_{t\rightarrow+\infty}\frac{\lambda_{\min}(t+1)}{t}>0$ almost surely. So,  Lemma \ref{one1}  gives
$$\lim_{t\rightarrow +\infty}\sigma_t=+\infty,\quad \mbox{a.s.}\quad \mbox{on}~D.$$
According to \dref{ia}, it  turns out that
\begin{eqnarray*}
\limsup_{t\rightarrow+\infty}\frac{\sum_{k=1}^{t}I_{A_k}}{t}\leq\limsup_{t\rightarrow+\infty}\frac{\sum_{k=1}^{t}I_{\lbrace \sigma_{k}\leq\sqrt{\sigma^2+L^2}\rbrace}}{t}=0,\quad \mbox{a.s.}\quad \mbox{on}~D,
\end{eqnarray*}
which contradicts to \dref{zbl}. We thus conclude  \dref{xyzw} and Lemma \ref{zh} applies.
\end{proof}

%





\appendix
\section{Proof of Lemma \ref{c1} and Proposition \ref{tzz1}}\label{AppA}
To show Lemma \ref{c1}, we need a simple fact.
\begin{lemma}\label{dj}
Assume the conditions of Lemma \ref{c1} hold,
then
\begin{eqnarray*}
\sup_{t\geq 1}\frac{1}{t}\sum_{i=1}^{t}y_i^2<+\infty \quad \mbox{is equaivalent to}\quad   \sup_{t}\sigma_t<+\infty \quad \mbox{a.s.}.
\end{eqnarray*}
\end{lemma}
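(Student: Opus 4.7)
The plan is to prove the two implications of the equivalence separately, leaning on Proposition \ref{tzz1} (also proved in this appendix) and Lemma \ref{one1} from Section \ref{AppA1}.

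The easy direction $\sup_t\sigma_t<+\infty\Rightarrow\sup_{t\ge1}\frac{1}{t}\sum_{i=1}^t y_i^2<+\infty$ rests on the conditional Gaussian structure of the closed loop. By \dref{mean}--\dref{var}, $y_{i+1}\mid\mathcal{F}_i^y\sim N(0,\sigma_i^2)$, so $\{y_{i+1}^2-\sigma_i^2\}$ is a martingale-difference sequence with conditional variance $2\sigma_i^4$, bounded under the hypothesis. A standard strong law for martingale differences (via Kronecker's lemma applied to $\sum_i i^{-2}E[(y_{i+1}^2-\sigma_i^2)^2\mid\mathcal{F}_i^y]<+\infty$ a.s.) yields $t^{-1}\sum_{i=1}^t(y_i^2-\sigma_{i-1}^2)\to0$ a.s., and bounding $\sum_{i=1}^t\sigma_{i-1}^2\le t\sup_i\sigma_i^2$ closes this direction.

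The converse is the nontrivial direction. Assume $\sum_{i=1}^t y_i^2\le Kt$ eventually a.s. The first step is to establish $\liminf_t\lambda_{\min}(t+1)/t>0$ a.s. Since $P_i^{-1}\ge I_n$ gives $\lambda_{\max}(P_i)\le1$, one has $\sigma_i^2\le\sigma^2+\|\phi_i\|^2\le\sigma^2+K_1+K_2|y_i|^{2b}$ by the polynomial growth of $\phi$, so $|y_{i-1}|\le M'$ forces $\sigma_{i-1}\le M$ for a constant $M$ depending only on $(M',K_1,K_2,\sigma)$. A Chebyshev bound on $|\{i\le t:|y_{i-1}|>M'\}|$ coupled with $\sum y_i^2\le Kt$ shows that, for $M'$ sufficiently large, at least half of the indices $i\le t$ give $\sigma_{i-1}\le M$, hence $\frac{1}{t}\sum_{i=1}^t\frac{1}{\sigma_{i-1}}\ge\frac{1}{2M}$. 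Proposition \ref{tzz1} then delivers $\liminf_t\lambda_{\min}(t+1)/t>0$ a.s.

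Suppose now $P(\sup_t\sigma_t=+\infty)>0$. Combined with the previous step, Lemma \ref{one1} promotes the supremum to a limit, giving $\sigma_t\to+\infty$ on a set $D$ of positive probability. To contradict this, I consider the events $A_i:=\{y_i^2\ge\sigma_{i-1}^2/2\}$: since $y_i/\sigma_{i-1}\mid\mathcal{F}_{i-1}^y\sim N(0,1)$, $P(A_i\mid\mathcal{F}_{i-1}^y)=p$ for a fixed constant $p>0$, and the MDS strong law (applied to the bounded differences $I_{A_i}-p$) yields $t^{-1}\sum_{i=1}^t I_{A_i}\to p$ a.s. For any $N>0$, on $D$ there exists a random $i_N$ with $\sigma_{i-1}^2\ge N$ for all $i\ge i_N$, whence $\sum_{i=1}^t y_i^2\ge\frac{N}{2}\sum_{i=i_N}^t I_{A_i}\ge\frac{Np}{4}t$ for all large $t$; letting $N\to+\infty$ contradicts $\sum y_i^2\le Kt$. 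The main obstacle is precisely the first step of the converse, where the polynomial growth hypothesis and Proposition \ref{tzz1} must be combined to convert averaged boundedness of $y_i$ into a linear-in-$t$ lower bound on $\lambda_{\min}(t+1)$; once that is secured, Lemma \ref{one1} and the Gaussian innovation structure close the argument cleanly.
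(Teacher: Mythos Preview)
Your argument is correct, but it follows a genuinely different route from the paper's.

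For the direction $\sup_t\sigma_t<+\infty\Rightarrow\sum_{i=1}^t y_i^2=O(t)$, the paper simply invokes Lemma~\ref{zh} (which relies on the $\sum_i\alpha_i=O(\log(\cdot))$ estimate from \cite{guo95}), whereas you give a self-contained martingale SLLN argument on $y_i^2-\sigma_{i-1}^2$. Your version is more elementary and exploits the conditional Gaussianity directly; the paper's route via Lemma~\ref{zh} has the advantage of already being proved in the needed generality (exponential growth).

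For the harder direction $\sum_{i=1}^t y_i^2=O(t)\Rightarrow\sup_t\sigma_t<+\infty$, the paper bounds the \emph{determinant} $|P_t^{-1}|$ polynomially in $t$ via Hadamard's inequality, so that $|P_t^{-1}|<(1+\sigma^{-2})^t$ eventually, and then appeals to Lemma~\ref{two} (whose proof is an induction on $\sigma_t$ built on Proposition~\ref{tzz1}). You instead bound the \emph{smallest eigenvalue} from below: a Chebyshev count shows a positive fraction of $\sigma_{i-1}$ are bounded, Proposition~\ref{tzz1} converts this into $\liminf_t\lambda_{\min}(t+1)/t>0$, and then Lemma~\ref{one1} plus the $N(0,1)$ tail of $y_i/\sigma_{i-1}$ yield the contradiction. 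Your path bypasses the somewhat technical induction inside Lemma~\ref{two} and is conceptually cleaner; the paper's path has the virtue of reusing Lemma~\ref{two}, which was already needed in Section~\ref{soc}. One small remark: Lemma~\ref{one1} is stated in the context of Theorem~\ref{2sta} and its proof uses the exponential bound \dref{fk1k2}; since the polynomial growth \dref{bx} trivially implies \dref{fk1k2}, your invocation of Lemma~\ref{one1} is legitimate, but it is worth saying so explicitly.
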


\begin{proof}
Denote
\begin{eqnarray*}
W\triangleq{\left\lbrace\sup_{t\geq 1}\frac{1}{t}\sum_{i=1}^{t}y_i^2<+\infty\right\rbrace}.
\end{eqnarray*}
By  \textit{Hadamard} inequality, for all sufficiently large $t$,
\begin{eqnarray}\label{po}
|P_{t}^{-1}|&\leq &\prod_{j=1}^{n}\left(1+\frac{1}{\sigma^2}\sum_{i=0}^{t-1}f_{1j}^2(y_i)\right)=\prod_{j=1}^{n}\left(1+O(t)+O\left(\sum_{i=0}^{t-1}y_i^{2b}\right)\right)\nonumber\\
&\leq & \prod_{j=1}^{n}\left(1+O(t)+O\left(\left(\sum_{i=0}^{t-1}y_i^{2}\right)^b\right)\right)=O\left(t^{(b+1)n}\right),\quad\mbox{on}~W.
\end{eqnarray}
So,   applying Lemma \ref{two}  infers
\begin{eqnarray}
\sup_{t}\sigma_t<+\infty\quad \mbox{a.s.}\quad\mbox{on}~W.\nonumber
\end{eqnarray}
Since the converse part is verified by Lemma \ref{zh}, the lemma is proved.
\end{proof}

\begin{proof}[Proof of Lemma \ref{c1}]
Let $W$ be defined in Lemma \ref{dj} and $\varepsilon=\frac{M}{2\sigma}$. By Proposition \ref{tzz1},
\begin{eqnarray}\label{pmt}
\lambda_{\min}(t+1)&\geq & M\sum_{i=1}^{t}\frac{1}{\sigma_{i-1}}-\frac{M}{2\sigma}t\nonumber\\
&\geq & M\sigma^{-1}t\left(\frac{1}{|P_{t}^{-1}|}\right)^{\frac{1}{2t}}-\frac{M}{2\sigma}t,\quad \mbox{a.s.}\quad\mbox{on}~W,
\end{eqnarray}
where $t$ is sufficiently large.
Combining  \dref{po} and \dref{pmt}, we obtain 
\begin{eqnarray}\label{diyi}
\liminf_{t\rightarrow+\infty}\frac{\lambda_{\min}(t+1)}{t}>0,\quad \mbox{a.s.}\quad\mbox{on}~W.
\end{eqnarray}
Define a martingale difference sequence
\begin{equation*}
Z_{i}=\frac{1}{i}(y_{i}^{2}-E(y_{i}^2|\mathcal{F}_{i-1}^{y}))=\frac{1}{i}(y_{i}^{2}-\sigma_{i-1}^2),\quad i\geq 1.
\end{equation*}
In view of Lemma \ref{dj},
\begin{eqnarray*}
\sum_{i=1}^{+\infty}E(Z_{i}^2|\mathcal{F}_{i-1}^{y})=\sum_{i=1}^{+\infty}\frac{2\sigma_{i-1}^4}{i^2}<+\infty,\quad\mbox{a.s.}\quad\mbox{on}~W.
\end{eqnarray*}
By \cite[Theorem 2.7]{cg91}, we deduce that $\sum_{i=1}^{+\infty}Z_i$ converges almost surely on $W$. This, together with  \textit{Kronecker Lemma}, leads to
\begin{equation}\label{yo}
\sum_{i=1}^{t}(y_{i}^{2}-\sigma_{i-1}^2)=o(t),\quad\mbox{a.s.}\quad\mbox{on}~W.
\end{equation}

By \dref{bx}, \dref{ylog}, \dref{diyi} and Lemma \ref{dj}, as long as $t$ is sufficiently large,
\begin{eqnarray}
\sigma_{t+1}^2&\leq &\sigma^2+\frac{\|\phi(y_{t+1})\|^2}{\lambda_{\min}(t+1)}=\sigma^2+O\left(\frac{y_{t+1}^{2b}}{t}\right)\nonumber\\
&=&\sigma^2+O\left(\frac{(\sigma_t\log(t+1))^{2b}}{t}\right)=\sigma^2+o(1),\quad \mbox{a.s.}\quad\mbox{on}~W.\nonumber
\end{eqnarray}
Recall that $ \sigma_{t}\geq \sigma$,  therefore
\begin{equation}\label{1o}
\lim_{t\rightarrow\infty}\sigma_{t}=\sigma,\quad \mbox{a.s.}\quad\mbox{on}~W.
\end{equation}
According to \dref{yo} and \dref{1o}, we derive
\begin{eqnarray*}
\lim_{t\rightarrow\infty}\frac{1}{t}\sum_{i=1}^{t}y_{i}^{2}=\lim_{t\rightarrow\infty}\frac{1}{t}\sum_{i=1}^{t}\sigma_{i-1}^2=\sigma^2,\quad \mbox{a.s.}\quad\mbox{on}~W,
\end{eqnarray*}
which together with \dref{diyi} gives Lemma \ref{c1}.
\end{proof}

\begin{proof}[Proof of Proposition \ref{tzz1}]
The proof is a minor modification of that of \cite{liuli2020}. So, we  shall follow the notations of \cite{liuli2020} and  only write the differences.  Let $\delta$,  $U_x$ and $S_j(q)$  be defined in   \cite[Section 3.1]{liuli2020}. Then,
\begin{equation*}
\inf_{\|x\|=1}\ell\left(\lbrace y: |\phi^{\tau}(y)x|> \delta\rbrace\cap \bigcup_{j=1}^{p}S_j(q)\right)>0.
\end{equation*}
For any $x\in\mathbb{R}^{\alpha}$ with $\|x\|=1$,
\begin{eqnarray}\label{wxf}
P(y_{i}\in U_x|\mathcal{F}_{i-1}^{y})&= &\frac{1}{\sqrt{2\pi}}\int_{s\sigma_{i-1}\in U_x}e^{-\frac{s^2}{2}}\,ds\nonumber\\
&= & \frac{1}{\sqrt{2\pi}}\int_{s\sigma_{i-1}\in U_x,|s|\leq \sigma^{-1}R}e^{-\frac{s^2}{2}}\,ds\nonumber\\
&\geq &\frac{1}{\sqrt{2\pi}}\cdot\frac{\ell(U_x)}{\sigma_{i-1}}\cdot e^{-\frac{(\sigma^{-1}R)^2}{2}}\nonumber\\
&\geq &\frac{1}{\sigma_{i-1}}\cdot\frac{e^{-\frac{(\sigma^{-1}R)^2}{2}}}{\sqrt{2\pi}}\cdot \inf_{\|x\|=1}\ell\left(\lbrace y: |\phi^{\tau}(y)x|> \delta\rbrace\cap \bigcup_{j=1}^{p}S_j(q)\right)\nonumber\\
&\triangleq & \frac{k_1}{\sigma_{i-1}},
\end{eqnarray}
where
$R=\mbox{dist}(\bigcup_{j=1}^{p}S_j(q)).$

Now, we modify Section 3.3 of \cite{liuli2020} to deduce out result.
To this end,
 for any $\epsilon>0$, recalling the definition of $g_{\epsilon}$ in  \cite[ Lemma 3.12]{liuli2020}, the strong large number laws for martingale differences shows that  all $g_{\epsilon}\in\mathcal{G}_{\epsilon}$ fulfill
\begin{eqnarray*}
\lim_{t\rightarrow\infty}\frac{1}{t}\sum_{i=1}^{t}g_{\epsilon}(i)+\epsilon=0,\quad \mbox{a.s.}.
\end{eqnarray*}
Since there are only  finite $U_{\epsilon}$ satisfying $U_{\epsilon}\subset\bigcup_{j=1}^{p}S_{j}(q)$, we conlude
\begin{eqnarray}
\lim_{t\rightarrow\infty}\inf_{U_{\epsilon}\subset\cup_{j=1}^{p}S_{j}(q)}\frac{1}{t}\sum_{i=1}^{t}g_{\epsilon}(i)=-\epsilon,\quad \mbox{a.s.},\nonumber
\end{eqnarray}
which, together with \cite[Lemma 3.12(ii)]{liuli2020}, yields
\begin{eqnarray*}
\liminf_{t\rightarrow+\infty}\inf_{\|x\|=1}\frac{1}{t}\sum_{i=1}^{t}g_{x}(i)
&\geq &\liminf_{t\rightarrow+\infty}\inf_{\|x\|=1}\frac{1}{t}\sum_{i=1}^{t}g_{\epsilon}^{x}(i)\nonumber\\
&\geq & \liminf_{t\rightarrow\infty}\inf_{U_{\epsilon}\subset\bigcup_{j=1}^{p}S_{j}(q)}\frac{1}{t}\sum_{i=1}^{t}g_{\epsilon}(i)\nonumber\\
&=&-\epsilon,\quad \mbox{a.s.}.
\end{eqnarray*}
Furthermore, by the arbitrariness of $\epsilon$, we obtain
\begin{equation}\label{infa}
\liminf_{t\rightarrow+\infty}\inf_{\|x\|=1}\frac{1}{t}\sum_{i=1}^{t}g_{x}(i)\geq 0\quad \mbox{a.s.}.
\end{equation}
Combining \eqref{infa} and \eqref{wxf}, for any given $\varepsilon>0$, there exists a random integer $T>0$ such that for all $t>T$,
\begin{eqnarray*}
\frac{1}{t}\sum_{i=1}^{t}I_{\lbrace y_i\in U_x\rbrace}&>&\frac{1}{t}\sum_{i=1}^{t}P(y_{i}\in U_x|\mathcal{F}_{i-1}^{y})-\frac{\sigma^2}{\delta^2}\cdot\varepsilon\nonumber\\
&\geq & \frac{1}{t}\sum_{i=1}^{t}\frac{k_1}{\sigma_{i-1}}-\frac{\sigma^2}{\delta^2}\cdot\varepsilon.
\end{eqnarray*}
Then
\begin{eqnarray*}
\lambda_{\min}(t+1)&=&\inf_{\|x\|=1}x^{\tau}\left(I_n+\frac{1}{\sigma^2}\sum_{i=0}^{t}\phi_{i}\phi_{i}^{\tau}\right)x\geq\frac{1}{\sigma^2}\sum_{i=1}^{t}(\phi^{\tau}(y_i)x)^2\nonumber\\
&\geq &\frac{\delta^2}{\sigma^2}\cdot\left(\sum_{i=1}^{t}\frac{k_1}{\sigma_{i-1}}-\frac{\sigma^2}{\delta^2}\cdot\varepsilon t\right)=\frac{\delta^2}{\sigma^2}\cdot k_1\sum_{i=1}^{t}\frac{1}{\sigma_{i-1}}-\varepsilon t.
\end{eqnarray*}
Let $M=\frac{\delta^2}{\sigma^2}\cdot k_1$ and Proposition \ref{tzz1} follows.
\end{proof}


%
\bibliographystyle{siamplain}

\end{document}